\newtheorem{theorem}{Theorem}[section]
\newtheorem{lemma}[theorem]{Lemma}
\theoremstyle{definition}
\newtheorem{example}[theorem]{Example}
\newtheorem{proposition}[theorem]{Proposition}
\theoremstyle{remark}
\numberwithin{equation}{section}
   \newcommand{\ph}{\mbox{$\varphi$}}
    \renewcommand{\phi}{\varphi}
   \newcommand{\Ht}{\mbox{$H^{2}$}}
   \newcommand{\Hi}{\mbox{$H^{\infty}$}}
   \newcommand{\D}{\mbox{$\mathbb{D}$}}
   \newcommand{\C}{\mbox{$C_{\varphi}$}}
   \newcommand{\W}{\mbox{$W_{\psi,\varphi}$}}
   \newcommand{\Ws}{\mbox{$W_{\psi,\varphi}^{\textstyle\ast}$}}
 	\newfont{\caps}{cmcsc9}  
 	\newfont{\jour}{cmti9}  
\begin{document}
\title{Quasinormal and hyponormal weighted composition operators on $H^2$ and $A^2_{\alpha}$ with linear fractional compositional symbol}
\author{Mahsa Fatehi, Mahmood Haji Shaabani, Derek Thompson}
\maketitle

\begin{abstract}
In this paper, we study quasinormal and hyponormal composition operators \W with linear fractional compositional symbol $\ph$ on the Hardy and weighted Bergman spaces. We characterize the quasinormal composition operators induced on $H^{2}$ and $A_{\alpha}^{2}$ by these maps and many such weighted composition operators, showing that they are necessarily normal in all known cases. We eliminate several possibilities for hyponormal weighted composition operators but also give new examples of hyponormal weighted composition operators on $H^2$ which are not quasinormal.
\end{abstract}

 \section{Introduction}
\subsection{History.} 
An operator $A$ on a Hilbert space $H$ is \textit{hyponormal} if $A^{\ast}A-AA^{\ast} \geq 0$, or equivalently if $\|A^{\ast}f\| \leq \|Af\|$ for all $f \in H$. An operator $A$ is \textit{quasinormal} if $A$ and $A^{\ast}A$ commute.  It is known that every quasinormal operator is hyponormal (see \cite[Proposition 1.7 p. 29]{c4} and \cite[Proposition 4.2, p. 46]{c4}), and certainly every normal operator is both quasinormal and hyponormal. 

Normality and similar weaker conditions have been intensely studied in recent years for weighted composition operators on $H^2$ and related spaces. The normal and unitary weighted composition operators on $H^{2}$ were discovered in part by Bourdon and Narayan \cite{Bourdon}. They showed that every automorphism $\varphi$ of $\mathbb{D}$ has a companion weight function $\psi$ such that \W\ is unitary on $H^{2}$. They also found some other normal weighted composition operators on $H^{2}$. Le \cite{l} found similar results in several variables. Recently in \cite{coko}, Cowen, Jung, and Ko investigated when \Ws\ is hyponormal, and Jung, Kim, and Ko \cite{jkk}, while studying binormal composition operators, showed that \C\ on $H^2$ is quasinormal if and only it is normal.

We are focused on discovering hyponormal and quasinormal weighted composition operators on $H^{2}$ and $A_{\alpha}^{2}$ with linear fractional compositional symbol. The rest of Section 1 will give necessary preliminaries, including a proof of the spectrum of $C_{\ph}$ on $A^2_{\alpha}$ when \ph\ is a parabolic non-automorphism. In Section 2, we show that for almost all cases, if $\ph \in \mbox{LFT}(\D)$ and $\psi \in \Hi$ is continuous at the Denjoy-Wolff point of $\ph$, then \W\ is quasinormal if and only if it is normal (the only possible exception is when \ph\ is an automorphism but \W\ is not invertible). We also show that \C\ is quasinormal with $\ph \in \mbox{LFT}(\D)$ if and only if it is normal and $\ph(z) = \lambda z$ where $|\lambda|\leq 1$ (this extends the theorem of \cite{jkk} to $A^2_{\alpha}$). In Section 3, we turn our attention to hyponormal weighted composition operators. In particular, we give examples of new hyponormal weighted composition operators on \Ht\ which are not quasinormal, and eliminate some other possibilities.

\subsection{Preliminaries.} 
The Hilbert spaces we are considering are the classical Hardy space $H^{2}$ and the weighted Bergman spaces $A_{\alpha}^{2}$ on the complex unit disk $\D$. For $f = \sum_{n=0}^{\infty} a_n z^{n}$ analytic on $\mathbb{D}$, $H^2$ is the set 
$$\{ f: \|f\|^{2}=\sum_{n=0}^{\infty}|a_n|^{2}<\infty \}.$$
For $\alpha > -1$, the weighted Bergman space $A_{\alpha}^{2}$ consists of all analytic functions $f$ on $\mathbb{D}$ such that
$$\|f\|_{\alpha+2}^{2}=\int_{\mathbb{D}}|f(z)|^{2}(\alpha+1)(1-|z|^{2})^{\alpha}dA(z)<\infty,$$
where $dA$ is the normalized area measure on $\mathbb{D}$. The case when $\alpha=0$ is the (unweighted) Bergman space, denoted $A^{2}$. Both the weighted Bergman space and the Hardy space are reproducing kernel Hilbert spaces, when the reproducing kernel for evaluation at $w$ is given by $K_{w}(z)=(1-\overline{w}z)^{-\gamma}$ for $z,w \in \mathbb{D}$, with $\gamma=1$ for $H^{2}$ and $\gamma=\alpha+2$ for $A_{\alpha}^{2}$. We write $H^{\infty}$ for the space of bounded analytic functions on $\mathbb{D}$, and denote its norm by $\|.\|_{\infty}$, i.e.
$$\|f\|_{\infty}:=\sup_{z\in \mathbb{D}}|f(z)|.$$
A \textit{composition operator} $C_{\varphi}$ on $H^2$ or $A^2_{\alpha}$
 is defined by the rule $C_{\ph}(f)=f \circ \ph$ for $\ph: \D \rightarrow \D$ and analytic in $\D$. Moreover, for $\psi \in \Hi$ and an analytic self-map \ph\ of $\mathbb{D}$, we define the weighted composition operator \W\ by $\W f=\psi f \circ \varphi$ . Such weighted composition operators are clearly bounded on $H^{2}$ and $A_{\alpha}^{2}$.

A linear fractional self-map of $\mathbb{D}$ is a map of the form $\varphi(z)=(az+b)/(cz+d), ad-bc \neq 0$ for which $\varphi(\mathbb{D}) \subseteq \mathbb{D}$. We denote the set of those maps by $\mbox{LFT}(\mathbb{D})$. If $\varphi$,
not the identity map, is considered as a map of the extended complex plane $\mathbb{C} \cup \{\infty\}$ onto itself, it will have exactly two fixed points, counting multiplicities. The automorphisms of $\mathbb{D}$, denoted $\mbox{Aut}(\mathbb{D})$, are the maps in $\mbox{LFT}(\mathbb{D})$ that take $\mathbb{D}$
onto itself. They are necessarily of the form $\varphi(z)=\lambda(a-z)/(1-\overline{a}z)$, where $|\lambda|=1$ and $|a| < 1$ (see \cite{c3}). The automorphisms are divided into three subclasses, based on their fixed point behavior:

\begin{enumerate}
\item \textit{elliptic} if \ph\ has an interior fixed point,
\item \textit{hyperbolic} if \ph\ has a boundary fixed point $\zeta$ with $\ph'(\zeta) < 1$, and
\item \textit{parabolic} if \ph\ has a boundary fixed point $\zeta$ with $\ph'(\zeta) = 1$.  
\end{enumerate}

When describing the derivative at the boundary, we mean this in the sense of radial limits, since \ph\ need only be defined on \D. However, since the automorphisms (and all linear fractional-maps) extend to maps of $\mathbb{C} \cup \{ \infty \}$, there is no confusion here.

Linear fractional transformations are particularly interesting choices for the symbol of a composition operator, because of their connection to the reproducing kernels. This is exemplified in the Cowen adjoint formula. Though well-known, we re-state here due to its repeated use.

\begin{proposition}[Cowen adjoint formula]\label{cowen} Suppose $\ph= (az+b)/(cz+d)$ maps \D\ into \D. Then the adjoint of $C_{\varphi}$  acting on $H^{2}$ and $A^{2}_{\alpha}$ is given by
$$C_{\varphi}^{\ast}=T_{g}C_{\sigma}T_{h}^{\ast},$$
where
\begin{enumerate}
\item $\sigma(z):=({\overline{a}z-\overline{c}})/({-\overline{b}z+\overline{d}})$
is a self-map of $\mathbb{D}$, 
\item $g(z):=(-\overline{b}z+\overline{d})^{-\gamma}$ and $h(z):=(cz+d)^{\gamma}$, with $\gamma=1$ for $H^{2}$ and $\gamma=\alpha+2$ for $A^{2}_{\alpha}$, and
\item $g$ and $h$ belong to $H^{\infty}$.
\end{enumerate}
The map $\sigma$ is called the \textit{Krein adjoint} of \ph. We will refer to $g$ and $h$ as the \textit{Cowen auxiliary functions} for \ph. \end{proposition}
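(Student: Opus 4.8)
\emph{Proof proposal.} The plan is to verify the formula by testing both sides on the reproducing kernels $K_w$, $w\in\mathbb{D}$, whose linear span is dense in $H^2$ and in $A^2_{\alpha}$ (if $f\perp K_w$ for every $w$, then $f\equiv 0$). Since $C_\varphi$ is bounded on either space and, by the definition of the adjoint, $\langle C_\varphi f,K_w\rangle=f(\varphi(w))=\langle f,K_{\varphi(w)}\rangle$, we have $C_\varphi^{\ast}K_w=K_{\varphi(w)}$. Hence, once we know that $T_g$, $T_h$, and $C_\sigma$ are bounded, it suffices to establish the identity $T_gC_\sigma T_h^{\ast}K_w=K_{\varphi(w)}$ for all $w\in\mathbb{D}$.

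Next I would compute $T_gC_\sigma T_h^{\ast}K_w$ one operator at a time. Because $h\in H^\infty$, the standard computation gives $T_h^{\ast}K_w=\overline{h(w)}\,K_w$; applying $C_\sigma$ produces the function $z\mapsto\overline{h(w)}\,K_w(\sigma(z))=\overline{h(w)}\,(1-\overline{w}\sigma(z))^{-\gamma}$; and multiplying by $g$ yields
$$(T_gC_\sigma T_h^{\ast}K_w)(z)=g(z)\,\overline{h(w)}\,(1-\overline{w}\sigma(z))^{-\gamma}.$$
The task is to show this equals $K_{\varphi(w)}(z)=(1-\overline{\varphi(w)}z)^{-\gamma}$. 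Substituting $g(z)=(-\overline{b}z+\overline{d})^{-\gamma}$ and $\overline{h(w)}=(\overline{c}\,\overline{w}+\overline{d})^{\gamma}$ and taking $\gamma$-th roots, the desired equality reduces to the linear-fractional identity
$$(-\overline{b}z+\overline{d})\,(1-\overline{w}\sigma(z))=(\overline{c}\,\overline{w}+\overline{d})\,(1-\overline{\varphi(w)}z),$$
which one checks by expanding: using $\sigma(z)(-\overline{b}z+\overline{d})=\overline{a}z-\overline{c}$ on the left and $\overline{\varphi(w)}(\overline{c}\,\overline{w}+\overline{d})=\overline{a}\,\overline{w}+\overline{b}$ on the right, both sides equal $(\overline{d}+\overline{c}\,\overline{w})-(\overline{b}+\overline{a}\,\overline{w})z$.

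It remains to dispatch the ancillary claims. That $d\neq 0$ and that the pole $-d/c$ of $\varphi$ lies outside $\overline{\mathbb{D}}$ both follow from $\varphi(\mathbb{D})\subseteq\mathbb{D}$ (a pole in $\overline{\mathbb{D}}$ would make $\varphi$ unbounded on $\mathbb{D}$); hence $cz+d$ and $-\overline{b}z+\overline{d}$ are bounded away from $0$ on $\mathbb{D}$, so $g,h\in H^\infty$ and $T_g,T_h$ are bounded, as is $C_\sigma$ once $\sigma$ is known to be a self-map. For the latter I would use the relation $\sigma=j\circ\varphi^{-1}\circ j$ with $j(z)=1/\overline{z}$: since $j$ interchanges $\mathbb{D}$ with $\widehat{\mathbb{C}}\setminus\overline{\mathbb{D}}$ and $\varphi(\mathbb{D})\subseteq\mathbb{D}$ forces $\varphi^{-1}(\widehat{\mathbb{C}}\setminus\overline{\mathbb{D}})\subseteq\widehat{\mathbb{C}}\setminus\overline{\mathbb{D}}$, we get $\sigma(\mathbb{D})\subseteq\mathbb{D}$ (alternatively, check directly that $\sigma$ fixes $\partial\mathbb{D}$ setwise and has no pole in $\mathbb{D}$). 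With all three operators bounded and the kernel identity verified, $T_gC_\sigma T_h^{\ast}$ and $C_\varphi^{\ast}$ agree on a dense set and therefore coincide.

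The only genuine subtlety is the ``take $\gamma$-th roots'' step when $\gamma=\alpha+2$ is not an integer: one must check that the principal-branch powers split as claimed over the products involved. This is where I expect the care to be needed, and it works because each linear factor appearing, namely $1-\overline{w}\sigma(z)$, $1-\overline{\varphi(w)}z$, $-\overline{b}z+\overline{d}$, and $\overline{c}\,\overline{w}+\overline{d}$, maps $\mathbb{D}$ (respectively $\mathbb{D}\times\mathbb{D}$) into the open right half-plane, a consequence of $|c/d|<1$ together with $\sigma(\mathbb{D}),\varphi(\mathbb{D})\subseteq\mathbb{D}$; thus all arguments stay in $(-\pi/2,\pi/2)$ and $(xy)^{\gamma}=x^{\gamma}y^{\gamma}$ is valid. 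For $H^2$ (where $\gamma=1$) this issue does not arise, and evaluating at $z=0$, where $\sigma(0)=-\overline{c}/\overline{d}$, gives a quick consistency check on the branches.
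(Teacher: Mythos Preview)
The paper does not actually give a proof of this proposition; it is stated as the well-known Cowen adjoint formula and simply quoted for later use. Your argument---checking that both operators agree on the dense span of reproducing kernels via $C_\varphi^\ast K_w=K_{\varphi(w)}$, computing $T_gC_\sigma T_h^\ast K_w$ explicitly, and reducing to the elementary identity $(-\overline{b}z+\overline{d})(1-\overline{w}\sigma(z))=(\overline{c}\,\overline{w}+\overline{d})(1-\overline{\varphi(w)}z)$---is precisely the standard proof due to Cowen, and it is correct.

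One minor correction to your branch discussion: the factors $-\overline{b}z+\overline{d}$ and $\overline{c}\,\overline{w}+\overline{d}$ need not literally lie in the right half-plane, since $d$ is an arbitrary nonzero complex number; what $|b|<|d|$ and $|c|<|d|$ give you is that these quantities stay in a fixed half-plane (the one containing $\overline{d}$ and bounded by a line through $0$). The simplest remedy is to normalize the representation $(az+b)/(cz+d)$ so that $d>0$ at the outset---this leaves $\varphi$ and $\sigma$ unchanged and only rescales $g,h$ by reciprocal constants of modulus one, after which your half-plane argument goes through exactly as written.
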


Composition operators on these spaces are also deeply intertwined with the function theory of \D, particularly the Denjoy-Wolff theorem:

\begin{proposition}[Denjoy-Wolff] Let \ph\ be an analytic map from the open unit disk into itself which is not the identity map and not an elliptic automorphism. Then \ph\ has a unique point $\zeta$ in $\overline{\D}$ so that the iterates $\{ \ph_n \}$ of $\ph$ converge uniformly on compact subsets of \D\ to $\zeta$. \end{proposition}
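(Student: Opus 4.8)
The plan is to argue according to whether $\varphi$ has a fixed point in $\D$. By the Schwarz--Pick lemma $\varphi$ can have at most one fixed point in $\D$ (two would force $\varphi$ to be the identity), so the two cases are mutually exclusive. Suppose first $\varphi(a)=a$ with $a\in\D$; conjugating by the involutive automorphism $z\mapsto(a-z)/(1-\overline{a}z)$ we may assume $a=0$. Since $\varphi$ is neither the identity nor an elliptic automorphism it is not a rotation, so by the equality case of the Schwarz lemma the bounded analytic function $z\mapsto\varphi(z)/z$ has modulus strictly below $1$ on all of $\D$, whence $c_r:=\sup_{|z|\le r}|\varphi(z)/z|<1$ for each $r<1$. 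Then $\varphi$ maps $\{|z|\le r\}$ into itself with $|\varphi_n(z)|\le c_r^{\,n}|z|$ there, so $\varphi_n\to0$ uniformly on compact subsets and $\zeta=a$ is the unique limit.

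Now suppose $\varphi$ has no fixed point in $\D$. For $0<r<1$ the map $r\varphi$ sends $\overline{\D}$ into the compact set $r\overline{\D}\subset\D$, hence (being a strict contraction of the hyperbolic metric) has a unique fixed point $p_r\in\D$; any subsequential limit $p$ of $(p_r)$ as $r\to1^-$ satisfies $\varphi(p)=\lim r\varphi(p_r)=\lim p_r=p$, so $p\notin\D$, and therefore $p_{r_j}\to\zeta$ for some $\zeta\in\partial\D$ along a suitable sequence $r_j\to1$. Applying Julia's lemma (the Julia--Carath\'{e}odory estimate) to the maps $r_j\varphi$, each of which fixes $p_{r_j}$, and passing to the limit $r_j\to1$ yields Wolff's lemma: $\varphi$ maps every horodisk $E(\zeta,k)=\{z\in\D:|1-\overline{\zeta}z|^2/(1-|z|^2)<k\}$ into itself.

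It remains to show the iterates converge to $\zeta$. The family $\{\varphi_n\}$ is uniformly bounded, hence normal; let $g$ be a locally uniform limit of a subsequence $\varphi_{n_j}$. If $g$ were nonconstant then $g(\D)\subseteq\D$ by the maximum modulus principle, and, choosing a further subsequence so that $m_j:=n_{j+1}-n_j$ is either eventually constant or tends to $\infty$ and $\varphi_{m_j}\to h$ locally uniformly, the identity $\varphi_{n_{j+1}}=\varphi_{m_j}\circ\varphi_{n_j}$ forces (after passing to limits, legitimate since $g(\D)\subseteq\D$) $h\circ g=g$, so $h=\mathrm{id}$ on the open set $g(\D)$ and hence on $\D$; an eventually constant $m_j\equiv m$ then makes $\varphi_m=\mathrm{id}$, i.e.\ $\varphi$ an elliptic automorphism, while $m_j\to\infty$ lets one run the same device on $\varphi_{m_j-1}$ (using that locally uniform limits respect differentiation to rule out a constant limit there) to produce a self-map right inverse of $\varphi$, forcing $\varphi\in\mathrm{Aut}(\D)$ --- and a hyperbolic or parabolic automorphism, being a dilation or translation in a half-plane model, has iterates converging to its boundary fixed point and so admits no nonconstant subsequential limit. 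All of these contradict the hypotheses, so every subsequential limit $g$ is a constant $\omega\in\overline{\D}$. If $\omega\in\D$ then comparing $\varphi_{n_j+1}=\varphi\circ\varphi_{n_j}\to\varphi(\omega)$ with $\varphi_{n_j+1}=\varphi_{n_j}\circ\varphi\to\omega$ gives $\varphi(\omega)=\omega$, which is impossible, so $\omega\in\partial\D$; and since $0\in\overline{E(\zeta,1)}$ and each $E(\zeta,k)$ is mapped into itself by $\varphi$, the orbit $\{\varphi_n(0)\}$ lies in $\overline{E(\zeta,1)}$, whose only boundary point is $\zeta$, forcing $\omega=\lim\varphi_{n_j}(0)=\zeta$. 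Thus every subsequential limit equals the constant $\zeta$, so $\varphi_n\to\zeta$ uniformly on compacts, and uniqueness of the Denjoy--Wolff point is immediate.

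I expect the no-fixed-point case to be the real work: Wolff's lemma needs the Julia--Carath\'{e}odory inequality together with the limiting argument through the approximants $r\varphi$, and the normal-families step must carefully exclude nonconstant subsequential limits of the iterates. The temptation to argue instead that the orbit enters every horodisk $E(\zeta,k)$ should be resisted: a parabolic automorphism, which is a translation in the half-plane model, keeps each orbit on a single horocycle while still converging to $\zeta$, so the horodisk invariance has to be combined with the normal-families argument rather than used to force the orbit to shrink toward $\zeta$ on its own.
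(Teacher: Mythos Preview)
The paper does not prove this proposition: it is the classical Denjoy--Wolff theorem, quoted as background without proof (the surrounding text simply cites it and refers the reader to \cite{co2} for function theory on the disk). So there is no proof in the paper to compare against.

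Your argument is the standard one and is essentially correct. Two small points worth tightening. First, in the no-fixed-point case what you actually apply to the approximants $r_j\varphi$ is Schwarz--Pick, not Julia's inequality: each $r_j\varphi$ contracts hyperbolic disks about its fixed point $p_{r_j}$, and letting $p_{r_j}\to\zeta\in\partial\D$ turns those hyperbolic disks into horodisks, yielding Wolff's lemma. Second, the dichotomy ``$m_j:=n_{j+1}-n_j$ is eventually constant or tends to $\infty$'' is not a property one can always arrange by thinning, and the constant branch is unnecessary anyway: since $n_j\to\infty$, you can always pass to a sub-subsequence with $n_{j+1}-n_j\to\infty$, after which the rest of your normal-families argument (showing $h=\mathrm{id}$, then producing a right inverse for $\varphi$, then disposing of the hyperbolic/parabolic automorphism case by explicit half-plane computation) goes through cleanly.
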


Another consequence of the Denjoy-Wolff theorem is that this unique point $\zeta$ always has $\ph'(\zeta) \leq 1$.  For more on geometric function theory of the disk, see \cite{co2}.

\subsection{Parabolic non-automorphisms and a spectral interlude.} We will quickly see in Section 2 that parabolic non-automorphic maps are central to this paper, so considerable time is spent on them in this section, including finding the spectrum of composition operators with such symbols on $A^2_{\alpha}$, a fact we require later.

A map $\varphi \in \mbox{LFT}(\mathbb{D})$ is called parabolic if it has a fixed point $\zeta \in \partial \mathbb{D}$ of multiplicity $2$. The map $\tau(z):=(1+\overline{\zeta}z)/(1-\overline{\zeta}z)$ takes the unit disk onto the right half-plane $\Pi$ and sends $\zeta$ to $\infty$. Therefore, $\phi:=\tau\circ\varphi\circ\tau^{-1}$ is a self-map of $\Pi$ which fixes only $\infty$, and so must be the mapping of translation by some complex number $t$, where necessarily $\textrm{Re } t \geq 0$. Appropriately, we will call this the \textit{translation number $t$ of \ph}. Hence $\varphi(z)=\tau^{-1}(\tau(z)+t)$ for each $z \in \mathbb{D}$. Therefore, it is easy to see that
\begin{eqnarray*}
\ph(z)=\frac{(2-t)z+t\zeta}{2+t-t\overline{\zeta}z}.
\end{eqnarray*}
Note that if $\mbox{Re} \textrm{ } t=0$, then $\varphi \in \mbox{Aut}(\mathbb{D})$. If, on the other hand, $\mbox{Re}\textrm{ } t > 0$, then $\varphi  \not\in  \mbox{Aut}(\mathbb{D})$. When the translation number $t$ is strictly positive, we call $\varphi$ a \textit{positive parabolic non-automorphism}. Among the linear fractional self-maps of $\mathbb{D}$ fixing $\zeta \in \partial \mathbb{D}$, the parabolic ones are characterized by $\varphi'(\zeta)=1$.\\

Recall that the Denjoy-Wolff theorem only guarantees uniform convergence under iteration on \textit{compact subsets} of \D. In the following proposition, we can see that there are some linear fractional self-maps of $\mathbb{D}$ such that the iterates $\{ \varphi_{n} \}$  converge uniformly on \textit{all} of  $\mathbb{D}$. We will use the following result, which was proved in \cite[Example 5]{derek}, in the proof of Theorem \ref{qnn}. 

\begin{proposition}\label{uci} If \ph\ is a parabolic non-automorphism, then \ph\ converges uniformly under iteration on all of \D\ to the constant function equal to its Denjoy-Wolff point. \end{proposition}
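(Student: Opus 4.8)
The plan is to pass to the right half-plane model, where a parabolic non-automorphism becomes translation by $t$ with $\operatorname{Re} t > 0$, and track the iterates explicitly. First I would write $\varphi_n = \tau^{-1} \circ (\,\cdot\, + nt) \circ \tau$, so that for $z \in \mathbb{D}$ we have $\varphi_n(z) = \tau^{-1}(\tau(z) + nt)$. Since $\tau$ maps $\mathbb{D}$ onto the right half-plane $\Pi$, the point $w := \tau(z)$ ranges over $\Pi$, i.e. $\operatorname{Re} w \geq 0$ (with $\operatorname{Re} w = 0$ only for boundary points, which we may include or exclude harmlessly). The composite translate $w + nt$ then has $\operatorname{Re}(w + nt) = \operatorname{Re} w + n \operatorname{Re} t \geq n \operatorname{Re} t \to \infty$ uniformly in $w \in \Pi$, because $\operatorname{Re} t > 0$ is a fixed positive constant independent of $z$. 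Thus $w + nt \to \infty$ uniformly over the half-plane, and I would conclude by showing $\tau^{-1}$ carries this uniform convergence to $\infty$ into uniform convergence to $\tau^{-1}(\infty) = \zeta$, the Denjoy-Wolff point.

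The key quantitative step is to estimate $|\varphi_n(z) - \zeta|$ directly. Writing $\tau^{-1}(\omega) = \zeta \,\frac{\omega - 1}{\omega + 1}$ (the inverse Cayley-type transform sending $\infty$ to $\zeta$), one computes
\[
\varphi_n(z) - \zeta \;=\; \zeta\left( \frac{\tau(z) + nt - 1}{\tau(z) + nt + 1} - 1 \right) \;=\; \frac{-2\zeta}{\tau(z) + nt + 1}.
\]
Hence
\[
|\varphi_n(z) - \zeta| \;=\; \frac{2}{|\tau(z) + nt + 1|} \;\leq\; \frac{2}{\operatorname{Re}(\tau(z) + nt + 1)} \;=\; \frac{2}{\operatorname{Re}\tau(z) + n\operatorname{Re} t + 1} \;\leq\; \frac{2}{n \operatorname{Re} t + 1},
\]
using $\operatorname{Re}\tau(z) \geq 0$ for all $z \in \mathbb{D}$. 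The right-hand bound is independent of $z$ and tends to $0$ as $n \to \infty$, which is exactly uniform convergence of $\varphi_n$ to the constant function $\zeta$ on all of $\mathbb{D}$.

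The only mild obstacle is bookkeeping with the boundary of $\Pi$: strictly speaking $\tau$ maps $\mathbb{D}$ onto the open right half-plane, so $\operatorname{Re}\tau(z) > 0$ for $z \in \mathbb{D}$, which is all that the inequality above needs (we never actually require closure). One should also double-check the normalization of $\tau$ and $\tau^{-1}$ against the formula given in the excerpt, $\tau(z) = (1+\overline{\zeta}z)/(1-\overline{\zeta}z)$, so that the algebra in the displayed identity for $\varphi_n(z) - \zeta$ comes out with the stated constants; this is routine. No deeper input is needed — in particular we do not even invoke the Denjoy-Wolff theorem, since the explicit estimate both identifies the limit point as $\zeta$ and gives the uniform rate. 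This recovers \cite[Example 5]{derek}.
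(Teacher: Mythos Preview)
Your argument is correct. The paper itself does not supply a proof of this proposition; it simply quotes the result from \cite[Example 5]{derek}, so strictly speaking there is no in-paper proof to compare against. Your computation via the Cayley transform is the standard way to establish the statement and is presumably what appears in the cited reference: conjugating to the right half-plane turns iteration into translation by $nt$, and the explicit identity
\[
\varphi_n(z)-\zeta=\frac{-2\zeta}{\tau(z)+nt+1}
\]
together with $\operatorname{Re}\tau(z)>0$ on $\mathbb{D}$ and $\operatorname{Re}t>0$ gives the uniform bound $|\varphi_n(z)-\zeta|\le 2/(n\operatorname{Re}t+1)$ independent of $z$. Nothing is missing; the bookkeeping with $\tau^{-1}(\omega)=\zeta(\omega-1)/(\omega+1)$ checks out against the paper's normalization of $\tau$.
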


To our knowledge, no one has found the spectrum $\sigma(C_{\ph})$ on $A^2_{\alpha}$ when \ph\ is a parabolic non-automorphism, though Cowen and MacCluer \cite[Section 7.7]{cm1} proved that $\sigma(C_{\ph})$ is a logarithmic spiral from 1 to 0 when $C_{\ph}$ acts on \Ht. Here, we prove the same fact for $A^2_{\alpha}$ by a small adjustment of the proofs found in \cite{cm1}.

\begin{proposition}\label{spectrum} Let \ph\ be a parabolic non-automorphism with Denjoy-Wolff point $\zeta$  and translation number $t$. On $H^2$ or $A^2_{\alpha}$, $$\sigma(C_{\ph}) = \{ e^{-\beta t} : 0 \leq \beta < \infty \} \cup \{ 0 \} $$ and every point of $\sigma(C_{\ph})$ except $0$ is an eigenvalue. \end{proposition}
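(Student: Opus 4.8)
The plan is to follow the treatment of $\sigma(C_\varphi)$ on \Ht\ in \cite[Section 7.7]{cm1}, checking at each step that the arguments carry over to $A^2_\alpha$; the needed adjustments are exactly where the Bergman weight $(1-|z|^2)^\alpha$ must be carried along. Throughout, write $\tau(z):=(1+\overline{\zeta}z)/(1-\overline{\zeta}z)$ for the Cayley map carrying \D\ onto the right half-plane $\Pi$ and $\zeta$ to $\infty$, so that, as recalled before the statement, $\tau\circ\varphi=\tau+t$ on \D.

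\emph{The eigenvalues and the point $0$.} For $\beta\geq 0$ put $f_\beta(z):=\exp(-\beta\tau(z))$. Since $\mathrm{Re}\,\tau(z)>0$ on \D\ we have $|f_\beta|\leq 1$, so $f_\beta\in\Hi\subseteq\Ht\cap A^2_\alpha$ and $f_\beta\not\equiv0$; and $C_\varphi f_\beta=f_\beta\circ\varphi=\exp(-\beta(\tau+t))=e^{-\beta t}f_\beta$. Hence every $e^{-\beta t}$, $\beta\geq 0$, is an eigenvalue of $C_\varphi$ on both spaces. As $\beta$ runs over $[0,\infty)$ these numbers trace the logarithmic spiral $S:=\{e^{-\beta t}:0\leq\beta<\infty\}$, whose closure is $S\cup\{0\}$ because $\mathrm{Re}\,t>0$; since $\sigma(C_\varphi)$ is closed, $S\cup\{0\}\subseteq\sigma(C_\varphi)$. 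That $0\in\sigma(C_\varphi)$ but is not an eigenvalue is immediate: $C_\varphi$ is injective (as $\varphi$ is nonconstant), and it is standard that a composition operator is invertible precisely when its symbol is an automorphism of \D, which $\varphi$ is not.

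\emph{The reverse inclusion.} It remains to show $\sigma(C_\varphi)\subseteq S\cup\{0\}$, and here I would reproduce the argument of \cite{cm1}. Embed $C_\varphi$ in the strongly continuous semigroup $\{C_{\varphi_s}\}_{s\geq0}$, where $\varphi_s:=\tau^{-1}(\tau(\cdot)+st)$ is the parabolic non-automorphism with translation number $st$, so that $\varphi_0=\mathrm{id}$, $\varphi_{s+u}=\varphi_s\circ\varphi_u$, and $\varphi_1=\varphi$; strong continuity and the semigroup law follow as on \Ht, using only that the $\varphi_s$ are uniformly bounded self-maps of \D. Its generator is the weighted first-order operator $Af=\frac{t(1-\overline{\zeta}z)^2}{2\overline{\zeta}}\,f'$, whose spectrum is the ray $\{-\beta t:0\leq\beta<\infty\}$ (its eigenfunctions are, up to constants, the $f_\beta$ above). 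Combining this with the spectral-radius computation $\lim_n\|C_{\varphi_n}\|^{1/n}=1$ — valid on $A^2_\alpha$ by the same iterate-norm estimates used on \Ht\ — one concludes, following \cite{cm1}, that $\sigma(C_\varphi)\subseteq S\cup\{0\}$. A more hands-on alternative, also as in \cite{cm1}, is to pass to the half-plane model: writing $F=f\circ\tau^{-1}$, the equation $C_\varphi f=\lambda f$ becomes $F(w+t)=\lambda F(w)$ on $\Pi$, and iterating this along the ray $w+nt\to\infty$ together with the non-tangential boundary behaviour of $F$ at $\infty$ forces $\lambda\in S$; one then checks $C_\varphi-\lambda$ is bounded below with dense range for the remaining $\lambda$.

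\emph{Main obstacle.} The lower bound and the identification of $0$ are routine; the content is the reverse inclusion. The delicate points are (i) the passage from $\sigma(A)$ to $\sigma(C_\varphi)=\sigma(e^{A})$: the inclusion $e^{\sigma(A)}\subseteq\sigma(e^{A})$ is automatic, but ruling out spurious spectrum on $|\lambda|=1$ — the circle on which spectral mapping for $C_0$-semigroups can fail — requires the explicit eigenfunctions together with the fact that the spectral radius is exactly $1$; and (ii) verifying that the iterate-norm estimates giving $\lim_n\|C_{\varphi_n}\|^{1/n}=1$ remain valid for the $A^2_\alpha$ norm. This last point is precisely the "small adjustment" to \cite{cm1} alluded to before the statement, and it is exactly where the weight $(1-|z|^2)^\alpha$ enters and must be tracked.
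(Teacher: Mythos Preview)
Your argument is correct in outline and shares the paper's basic architecture: the lower bound comes from the explicit eigenfunctions $f_\beta=\exp(-\beta\tau)$, which lie in $H^\infty$ and hence in $A^2_\alpha$ as well as $H^2$; the upper bound is inherited from the Cowen--MacCluer proof once one knows the spectral radius of $C_\varphi$ on $A^2_\alpha$ equals $1$. Where you diverge is in how you handle that last point. You flag the spectral-radius computation as the ``main obstacle'' and propose to redo the iterate-norm estimates of \cite{cm1} while carrying the weight $(1-|z|^2)^\alpha$ along, and you also sketch a semigroup/generator route as an alternative framework. The paper instead simply observes that the \emph{only} place the \cite{cm1} proof uses the $H^2$ setting is the spectral radius, and then cites Hurst \cite[Theorem~6]{hu}: since $\varphi'(\zeta)=1$, the spectral radius of $C_\varphi$ on $A^2_\alpha$ is $\varphi'(\zeta)^{-(\alpha+2)/2}=1$, and the rest of the Cowen--MacCluer argument transfers verbatim. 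So your semigroup machinery and your proposed re-derivation of the iterate-norm asymptotics are unnecessary; a one-line citation does the job. Your treatment has the virtue of being more self-contained and of making explicit why $0$ is in the spectrum but not an eigenvalue (the paper does not spell this out), but the paper's proof is considerably shorter.
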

\begin{proof} We need only prove this fact for $A^2_{\alpha}$. Without loss of generality, assume $\zeta = 1$ (otherwise, choose $\theta$ so that $e^{i\theta}=\zeta$ and consider the operator $C_{e^{i\theta}}C_{\varphi}C_{e^{-i\theta}}=C_{\tilde{\varphi}}$ whose symbol has fixed point $1$).  In \cite[Theorem 6]{hu}, Cowen and MacCluer proved $\sigma(C_{\ph}) = \{ e^{-\beta t} : |\arg \beta| = 0 \} \cup \{ 0 \}$ for $H^2$. The only time they used the fact that they were on $H^2$ was that the spectral radius of $C_{\ph}$ on $H^2$ is 1. By \cite[Theorem 6]{hu}, since $\ph'(\zeta) = 1$, the spectral radius of \C\ is also $1$ on $A^2_{\alpha}$, so the same containment holds on $A^2_{\alpha}$. In \cite[Corollary 7.42]{cm1}, the reverse containment was given by showing that each such $e^{-\beta t}$ is an eigenvalue with an eigenvector in \Hi, meaning that they belong to $A^2_{\alpha}$ as well as \Ht. Therefore, we have the desired conclusion. \end{proof}

\section{Quasinormal weighted composition operators on $H^{2}$ and $A^{2}_{\alpha}$}
In this section, we investigate quasinormal composition operators and weighted composition operators on $H^{2}$ and $A^{2}_{\alpha}$. We will look at the non-automorphic and automorphic cases separately. \\

\subsection{$\ph \notin \mbox{Aut}(\mathbb{D})$.}The next theorem explains why we have focused on the properties of parabolic non-automorphisms.

\begin{proposition}\label{parabolic} Suppose that $\varphi \in \mbox{LFT}(\mathbb{D})$ is not an automorphism of
$\mathbb{D}$ and that $\varphi(\zeta)=\eta$ for some $\zeta,\eta \in \partial\mathbb{D}$. Let $\psi \in H^{\infty}$ be continuous at $\zeta$ and $\psi(\zeta)\neq 0$. If \W\ is quasinormal on $H^{2}$ or $A^{2}_{\alpha}$, then $\varphi$ is parabolic.\end{proposition}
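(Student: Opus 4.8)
The plan is to use the quasinormality condition to force a rigid relationship between $\varphi$ and its Krein adjoint, and then read off the parabolic conclusion from the fixed-point structure. First I would recall that $W_{\psi,\varphi}$ quasinormal means $W_{\psi,\varphi}$ commutes with $W_{\psi,\varphi}^{\ast}W_{\psi,\varphi}$, equivalently $W_{\psi,\varphi}^{\ast}W_{\psi,\varphi}^2 = W_{\psi,\varphi}^{\ast}W_{\psi,\varphi}W_{\psi,\varphi}$. I would compute both sides as weighted composition operators: $W_{\psi,\varphi}^2 = W_{\psi\cdot(\psi\circ\varphi),\,\varphi\circ\varphi}$, and using the Cowen adjoint formula (Proposition \ref{cowen}) together with the fact that the adjoint of a weighted composition operator with analytic symbol acts nicely on reproducing kernels, namely $W_{\psi,\varphi}^{\ast} K_w = \overline{\psi(w)} K_{\varphi(w)}$. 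Applying the quasinormality identity to a reproducing kernel $K_w$ is the cleanest route: $W_{\psi,\varphi}^{\ast}W_{\psi,\varphi}$ is positive and its square root $P$ satisfies $W_{\psi,\varphi} = UP$ (polar decomposition) with $UP = PU$ when quasinormal, but more concretely I would track how the two sides transform $K_w$ and extract a functional equation.

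The key computational step is the following. Since $W_{\psi,\varphi}^{\ast}W_{\psi,\varphi}$ commutes with $W_{\psi,\varphi}$, and $W_{\psi,\varphi}$ is (in these linear-fractional cases) of the form $T_{g}C_{\sigma}T_{h}^{\ast}$-type adjoint expressions, I expect the identity to collapse to a statement comparing $\varphi\circ\varphi$ with $\varphi$ composed with the Krein adjoint $\sigma$ of $\varphi$, evaluated at the boundary point $\zeta$ and its neighborhood. Concretely, testing on kernels $K_w$ as $w \to \zeta$ and using that $\psi$ is continuous and nonzero at $\zeta$, I would obtain that the boundary fixed-point data of $\varphi$ must be "self-consistent" in a way that is impossible unless $\varphi'(\zeta) = 1$. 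The point is that a non-automorphic $\varphi \in \mathrm{LFT}(\mathbb{D})$ with $\varphi(\zeta) = \eta \in \partial\mathbb{D}$ automatically has $\eta$ as its Denjoy–Wolff point (this is standard: a linear fractional non-automorphism taking a boundary point to a boundary point maps $\zeta$ to the Denjoy–Wolff point, or $\zeta$ is itself the Denjoy–Wolff point), and quasinormality should force $\zeta = \eta$ and $\varphi'(\zeta) = 1$, i.e., $\varphi$ parabolic.

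More precisely, I would argue: (1) reduce to $\eta$ being the Denjoy–Wolff point $\omega$ of $\varphi$, since for a non-automorphism any boundary point sent to the boundary must be sent to $\omega$, so either $\zeta = \omega$ already or $\zeta \ne \omega$ with $\varphi(\zeta) = \omega$; (2) in the latter case derive a contradiction with quasinormality, because iterating and using the kernel transformation law would force $\psi$ to vanish somewhere forced by the dynamics near $\zeta$, contradicting $\psi(\zeta) \ne 0$ and continuity; (3) so $\zeta = \omega$ is the Denjoy–Wolff point, and then a parallel argument comparing $W_{\psi,\varphi}^{\ast}W_{\psi,\varphi}^2$ and $W_{\psi,\varphi}^{\ast}W_{\psi,\varphi}W_{\psi,\varphi}$ on kernels near $\zeta$ shows $\varphi'(\zeta)$ cannot be $< 1$ (the hyperbolic case), leaving $\varphi'(\zeta) = 1$, hence parabolic.

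The main obstacle I anticipate is step (2)–(3): turning the operator identity into the clean scalar/functional equation near $\zeta$ requires carefully handling the Cowen auxiliary functions $g, h$ and the composition $\psi\cdot(\psi\circ\varphi)$ versus $\psi$, and making sure the limiting argument as $w \to \zeta$ along the appropriate approach region is valid (normalizing the kernels $K_w/\|K_w\|$ and using that $C_\varphi^{\ast}$, $W_{\psi,\varphi}^{\ast}$ send normalized kernels at $w$ to something controlled by $\varphi'(\zeta)$ and $\psi(\zeta)$ in the limit). I expect that the key quantitative input is the angular-derivative/Julia–Carathéodory behavior: $\|W_{\psi,\varphi}^{\ast} K_w\| / \|K_w\|$ has a limit as $w \to \zeta$ governed by $\varphi'(\zeta)^{-\gamma/2}$ and $|\psi(\zeta)|$, and quasinormality forces a compatibility among the analogous quantities for $W_{\psi,\varphi}^2$ that only the parabolic normalization $\varphi'(\zeta) = 1$ can satisfy.
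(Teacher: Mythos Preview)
Your plan has a genuine gap at step (1). The claim that ``for a non-automorphism any boundary point sent to the boundary must be sent to $\omega$, so either $\zeta=\omega$ already or $\zeta\ne\omega$ with $\varphi(\zeta)=\omega$'' is false: take $\varphi(z)=z/(2-z)$, which has Denjoy--Wolff point $0\in\mathbb{D}$ and a separate boundary fixed point $\zeta=\eta=1$. This is exactly a non-parabolic, non-automorphic $\varphi\in\mbox{LFT}(\mathbb{D})$ sending $\partial\mathbb{D}$ to $\partial\mathbb{D}$, and your dichotomy does not cover it. The proposition must rule out quasinormality for \emph{this} type of map as well, and your outline never confronts it.

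Even setting that aside, the kernel-asymptotic route in steps (2)--(3) is not developed enough to be convincing. The adjoint $W_{\psi,\varphi}^{\ast}$ acts cleanly on kernels, but the quasinormality identity $W_{\psi,\varphi}W_{\psi,\varphi}^{\ast}W_{\psi,\varphi}=W_{\psi,\varphi}^{\ast}W_{\psi,\varphi}^{2}$ also involves $W_{\psi,\varphi}K_{w}$, which is \emph{not} a reproducing kernel, so there is no obvious scalar equation to read off in the limit $w\to\zeta$. You assert that a ``compatibility\ldots only the parabolic normalization $\varphi'(\zeta)=1$ can satisfy'' will emerge, but you never write down what that compatibility is, and it is not clear one exists at the level of kernel norms alone.

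The paper's argument is entirely different and avoids both problems. It works modulo compact operators: using known $C^{\ast}$-algebra results for Toeplitz--composition operators, the two sides of the quasinormality identity are shown to be congruent (mod compacts) to scalar multiples of $C_{\varphi\circ\sigma\circ\varphi}$ and $C_{\varphi\circ\varphi\circ\sigma}$ respectively, where $\sigma$ is the Krein adjoint. A compactness comparison forces $\zeta=\eta$, and then a rigidity theorem for linear relations in the Calkin algebra yields $\varphi\circ\sigma\circ\varphi=\varphi\circ\varphi\circ\sigma$, hence $\sigma\circ\varphi=\varphi\circ\sigma$ by univalence. This commutation is known to characterize parabolic non-automorphisms among all $\varphi\in\mbox{LFT}(\mathbb{D})\setminus\mbox{Aut}(\mathbb{D})$ with a boundary fixed point, which automatically disposes of the interior-Denjoy--Wolff case you missed.
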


\begin{proof} In this proof, we will let $A \equiv B$ denote that two operators $A$ and $B$ have compact difference. Let \W\ be quasinormal on $H^{2}$ or $A^{2}_{\alpha}$. By \cite[Proposition 2.3]{fash} and \cite[Corollary 2.2]{kmm}, we obtain $\W \Ws \W \equiv \psi^{2}(\zeta)\overline{\psi(\zeta)}C_{\varphi}C_{\varphi}^{\ast}C_{\varphi}$ and $W^{\ast}_{\psi,\varphi}W_{\psi,\varphi}W_{\psi,\varphi}\equiv\psi^{2}(\zeta)\overline{\psi(\zeta)} C^{\ast}_{\varphi}C_{\varphi}C_{\varphi}$. Now using  \cite[Theorem 3.1]{kmm} and \cite[Theorem 3.2]{mw2}, we see that
\begin{eqnarray*}
  \W \Ws \W &\equiv& s \psi^{2}(\zeta)\overline{\psi(\zeta)}C_{\varphi}C_{\sigma}C_{\varphi}\\
  &=&s\psi^{2}(\zeta)\overline{\psi(\zeta)}C_{\varphi\circ\sigma\circ\varphi},
  \end{eqnarray*}
 and also by the same argument, we conclude that
\begin{eqnarray*}
  \Ws \W \W
  &\equiv&s\psi^{2}(\zeta)\overline{\psi(\zeta)}C_{\sigma}C_{\varphi}C_{\varphi}\\
  &=&s\psi^{2}(\zeta)\overline{\psi(\zeta)}C_{\varphi\circ\varphi\circ\sigma},
  \end{eqnarray*}
where $\sigma$ is the Krein adjoint of  $\varphi$ and by \cite[Proposition 3.6]{kmm} and \cite[Theorem 3.2]{mw2}, $s=|\varphi'(\zeta)|^{-1}$ for $H^{2}$ and $s=|\varphi'(\zeta)|^{-(\alpha+2)}$ for $A^{2}_{\alpha}$.
It is not hard to see that $C_{\varphi\circ\sigma\circ\varphi}$ is not compact (see \cite[Corollary 3.14]{cm1}). Let $\tilde{\varphi}:=\varphi\circ\varphi\circ\sigma$. If $\zeta \neq \eta$, then $\overline{\tilde{\varphi}(\mathbb{D})} \subseteq \mathbb{D}$. Hence by \cite[p. 129]{cm1}, $C_{\tilde{\varphi}}$ is compact. Therefore, if $\zeta \neq \eta$, then \W\ is not quasinormal. Now assume that $\zeta =\eta$.
Since \W\ is quasinormal, \cite[Theorem 5.13]{km} shows that $\varphi\circ\sigma\circ\varphi=\varphi\circ\varphi\circ\sigma$. Since $\varphi$ is univalent, $\sigma\circ\varphi=\varphi\circ\sigma$. Hence  \cite[p. 139]{kmm} implies that $\varphi$ is a parabolic non-automorphism.\end{proof}

We can now see that \W\ is quasinormal if and only if it is normal in this case.

\begin{theorem}\label{qnn} If $\ph \in \mbox{LFT}(\mathbb{D})$ is not an automorphism of \D\ and $\psi \in \Hi$ is continuous at the Denjoy-Wolff point of \ph, then \W\ on $H^2$ or $A^2_{\alpha}$ is normal if and only if it is quasinormal. \end{theorem}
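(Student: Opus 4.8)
The forward implication is the general fact recalled in the introduction, so the plan is to assume $W:=\W$ quasinormal --- hence hyponormal --- and prove it normal; one may assume $\psi\not\equiv0$, so that $W$ is injective, since $Wf=\psi\cdot(f\circ\ph)=0$ forces $f\equiv0$ on the open set $\ph(\D)$. The backbone I would use is a structural principle: by A.\ Brown's theorem a quasinormal operator $T$ is unitarily equivalent to $N\oplus(B\otimes S)$ with $N$ normal, $0\le B$, and $S$ the unilateral shift (see, e.g., \cite{c4}); hence if $T$ is not normal then $B\ne0$ and $\sigma(T)\supseteq\sigma(B\otimes S)=\|B\|\,\overline{\D}$ has nonempty interior. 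So it suffices to show that $\sigma(W)$ has empty interior, and I would do this by splitting on the location of the Denjoy--Wolff point $\zeta$ of $\ph$.

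If $\zeta\in\D$, then $\ph(\zeta)=\zeta$ and $K_\zeta$ is an eigenvector of $W^{\ast}$: $W^{\ast}K_\zeta=\overline{\psi(\zeta)}\,K_{\ph(\zeta)}=\overline{\psi(\zeta)}\,K_\zeta$. Since $W$ is hyponormal, $K_\zeta$ is then also an eigenvector of $W$ for $\psi(\zeta)$ --- because $W-\psi(\zeta)$ is hyponormal and $(W-\psi(\zeta))^{\ast}K_\zeta=0$ forces $(W-\psi(\zeta))K_\zeta=0$ --- and this pins $\psi$ down:
\[
\psi(z)=\psi(\zeta)\left(\frac{1-\overline{\zeta}\,\ph(z)}{1-\overline{\zeta}\,z}\right)^{\gamma}.
\]
As $\ph\in\mbox{LFT}(\D)$ and $|\zeta|<1$, the numerator and denominator are outer (each is a linear-fractional expression whose zeros and poles all lie in $\{\,|w|\ge1\,\}$), so $\psi$ is outer (unless $\psi\equiv0$, in which case $W=0$ is normal). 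Then I would use $W^{\ast}=C_\ph^{\ast}M_\psi^{\ast}$, together with the injectivity of $C_\ph^{\ast}$ (Proposition~\ref{cowen} exhibits it as a product of injective operators, $g$ and $h$ being outer), to get $\ker W^{\ast}=\ker M_\psi^{\ast}=\{0\}$, the last step because $\psi$ is outer and so $\psi$ times the space is dense. Thus $W$ is injective with dense range, and the structural principle makes $W$ normal. (This case subsumes $\overline{\ph(\D)}\subseteq\D$, where $\ph$ automatically has an interior fixed point.)

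If $\zeta\in\partial\D$, then $\ph(\zeta)=\zeta$ (as $\ph$ is not an automorphism) and $\psi$ is continuous at $\zeta$ by hypothesis, so Proposition~\ref{parabolic} applies. If $\psi(\zeta)=0$, then $\W\equiv\psi(\zeta)C_\ph=0$ by \cite[Proposition 2.3]{fash} and \cite[Corollary 2.2]{kmm}, so $W$ is compact and, being quasinormal, normal. If $\psi(\zeta)\ne0$, then Proposition~\ref{parabolic} forces $\ph$ to be a parabolic non-automorphism, and the same identity gives $\W\equiv\psi(\zeta)C_\ph$, so $\sigma_e(W)=\psi(\zeta)\,\sigma_e(C_\ph)\subseteq\psi(\zeta)\,\sigma(C_\ph)$; by Proposition~\ref{spectrum} this lies in a logarithmic spiral, which is a Jordan arc. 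Since a closed subset of a Jordan arc has connected complement in $\mathbb{C}$, the spectrum $\sigma(W)$ can exceed $\sigma_e(W)$ only by isolated eigenvalues of finite multiplicity, so $\sigma(W)$ has empty interior and the structural principle finishes the proof.

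The main obstacle is this last case. It ties together three earlier results: Proposition~\ref{parabolic}, to force a parabolic non-automorphism; the compact-difference identity $\W\equiv\psi(\zeta)C_\ph$, which is the one place continuity of $\psi$ at the Denjoy--Wolff point enters; and the spiral description of $\sigma(C_\ph)$ from Proposition~\ref{spectrum}, itself resting on the parabolic-non-automorphism analysis of Section~1 (in particular Proposition~\ref{uci}). The delicate step is the spectral bookkeeping: confirming that $\sigma(W)$ cannot acquire interior once $\sigma_e(W)$ is confined to an arc. The interior-fixed-point case is comparatively soft --- the hyponormal-eigenvector identity pins $\psi$ down to an outer function, and the structural principle does the rest.
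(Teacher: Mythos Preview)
Your argument is correct, but it follows a genuinely different route from the paper's. Both proofs split on whether the Denjoy--Wolff point $\zeta$ is interior or on the boundary, and both invoke Proposition~\ref{parabolic} and Proposition~\ref{spectrum} in the boundary case; the divergence is in the \emph{finishing mechanism} and in the \emph{interior case}.

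\textbf{Finishing mechanism.} The paper uses Putnam--Stampfli: a hyponormal operator whose spectrum has zero planar measure is normal \cite{stam}. You instead use Brown's structure theorem $T\cong N\oplus(B\otimes S)$ for quasinormal $T$, together with two consequences: (i) if $T$ is not normal then $\sigma(T)$ contains a disk, and (ii) $B\otimes S$ never has dense range. The paper's tool is stronger in that it needs only hyponormality, which is why the same argument later yields Theorem~\ref{parahypo}; your tool is specifically quasinormal.

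\textbf{Boundary case, $\psi(\zeta)\neq 0$.} The paper quotes \cite[Theorem~8]{derek} as a black box to obtain $\sigma(\W)\subseteq\psi(\zeta)\,\sigma(C_\ph)$ directly, relying on Proposition~\ref{uci}. You bypass that citation: from the compact difference $\W\equiv\psi(\zeta)C_\ph$ you get $\sigma_e(\W)$ inside the spiral, then use that a closed subset of a Jordan arc does not separate the plane, so the Fredholm index vanishes on the complement and $\sigma(\W)\setminus\sigma_e(\W)$ is discrete. This is more self-contained.

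\textbf{Interior case.} The paper argues that $\ph$ cannot send any boundary point to the boundary (invoking Proposition~\ref{parabolic} in contrapositive), hence $C_\ph$ is compact and so is $\W$. You instead exploit the hyponormal eigenvector identity at $K_\zeta$ to pin down $\psi$ as bounded away from zero, then show $\ker W^{\ast}=\{0\}$ via the Cowen adjoint formula, and conclude from Brown's decomposition that a quasinormal operator with dense range is normal. Your route here is longer but has the virtue of not needing any continuity of $\psi$ at boundary points, which Proposition~\ref{parabolic} would formally require.

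One minor expository point: you announce that it suffices to show $\sigma(W)$ has empty interior, but in the interior-fixed-point case you do not show this---you use the dense-range consequence of Brown's theorem instead. Both are valid deductions from the same decomposition, so there is no gap, but you should flag that you are using Brown's theorem in two different ways.
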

\begin{proof} Let $W_{\psi,\varphi}$ be quasinormal. First, assume as well that \ph\ has Denjoy-Wolff point $\zeta$ on $\partial \D$ and $\psi(\zeta) \neq 0$. By Proposition \ref{parabolic}, \ph\ must be a parabolic non-automorphism. Such maps converge uniformly under iteration to the Denjoy-Wolff point by Proposition \ref{uci}. By \cite[Theorem 8]{derek}, $\sigma(W_{\psi,\varphi})\subseteq \sigma(\psi(\zeta)C_{\ph})$ (\cite{derek} was written as if the setting was $H^2$, but the results hold for $A^2_\alpha$ as well, with identical arguments). In Proposition \ref{spectrum}, we showed that such a composition operator $C_\varphi$ has spectrum equal to logarithmic spiral from $1$ to $0$. Therefore, $W_{\psi,\varphi}$ has a spectrum with $1$ to $0$. However, hyponormal (and therefore also quasinormal) operators with zero area are normal \cite[Theorem 4]{stam}. If instead $\psi(\zeta) = 0$, then by \cite[Theorem 8]{derek}, the operator has spectrum equal to the singleton $\{ 0 \}$, but hyponormal operators have norm equal to their spectral radius, which is a contradiction.

Lastly, suppose $\zeta$, the Denjoy-Wolff point of \ph, is inside \D. By Theorem \ref{parabolic}, since \ph\ is not parabolic, \ph\ must not map any element of $\partial \D$ to $\partial \D$. Then, since $\overline{\varphi(\mathbb{D})} \subseteq \mathbb{D}$, $C_{\ph}$ is compact (see \cite[p. 129]{cm1}), and thus so is \W. This forces any such hyponormal operator again to be normal. The other direction is trivial. \end{proof}

In \cite{Bourdon}, these normal weighted composition operators have already been completely characterized for $H^2$ when the Denjoy-Wolff point of \ph\ is in $\D$. However, those authors do not completely identify the weights $\psi$ that make \W\ normal when \ph\ has its Denjoy-Wolff point on $\partial \D$, but only give known examples.

Next, we will turn attention to weighted composition operators with automorphic compositional symbol.

\subsection{$\ph \in \mbox{Aut}(\mathbb{D})$.}

In \cite{g}, Gunatillake showed that \W\ is invertible on $H^{2}$ if and only if $\psi$ is both bounded and bounded away from zero on $\mathbb{D}$ and $\varphi$ is an automorphism of $\mathbb{D}$. After that, in \cite{Bourdon2}, Bourdon showed the same result for invertible weighted composition operators on the weighted Bergman spaces.  In both settings, ~Hyv\"arinen et al. \cite[Corollary 5.1]{HLNS} discovered the spectrum of all such operators.

In \cite{mahsapre1}, the first two authors found all normal weighted composition operators \W\ when $\varphi \in \mbox{Aut}(\mathbb{D})$ and $\psi$ is analytic on $\overline{\mathbb{D}}$. In the following proposition, we add the assumption that  $\psi$ is bounded away from zero on $\mathbb{D}$, and by the similar idea which was used in  \cite{mahsapre1}, we characterize all invertible quasinormal weighted composition operators.

\begin{proposition}\label{auto} Suppose that \ph, not the identity and not an elliptic automorphism
of $\mathbb{D}$, is in $\mbox{Aut}(\mathbb{D})$. Let $\psi \in \Hi$ is continuous on $\partial \D$, and for each $z \in \overline{\mathbb{D}}$, $\psi(z)\neq 0$. Then \W\ is normal on $H^{2}$ or $A_{\alpha}^{2}$ if and only if $\psi(z)=\psi(0)K_{\sigma(0)}$, where $\sigma$ is the Krein adjoint of $\varphi$. \end{proposition}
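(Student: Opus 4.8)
The plan is to reduce the normality of $W_{\psi,\varphi}$ to a tractable functional equation by using the reproducing kernel structure, exactly as in the normal-operator arguments of \cite{mahsapre1} and \cite{Bourdon}. First I would record the adjoint action on kernels: since $W_{\psi,\varphi}^{\ast}K_w = \overline{\psi(w)}K_{\varphi(w)}$, and since $W_{\psi,\varphi}$ acts on a general reproducing kernel via the Cowen adjoint formula (Proposition~\ref{cowen}) as $W_{\psi,\varphi}K_w = \psi \cdot (K_w\circ\varphi) = \psi\, \overline{g(w)}\, K_{\sigma(w)} \cdot (\text{a scalar})$ — more precisely one computes $C_\varphi^{\ast}K_w = g\cdot (K_{\sigma(w)}\circ\text{id})\cdot\overline{h(w)}$ type identities — I would get explicit formulas for $W_{\psi,\varphi}K_w$ and $W_{\psi,\varphi}^{\ast}K_w$ in terms of $\psi$, the Krein adjoint $\sigma$, and the Cowen auxiliary functions $g,h$. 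Imposing $\|W_{\psi,\varphi}K_w\| = \|W_{\psi,\varphi}^{\ast}K_w\|$ for all $w$ (a necessary consequence of normality) and then the full identity $W_{\psi,\varphi}^{\ast}W_{\psi,\varphi}K_w = W_{\psi,\varphi}W_{\psi,\varphi}^{\ast}K_w$ should force the shape of $\psi$.

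The key steps, in order: (1) write down $W_{\psi,\varphi}^{\ast} = M_{?}C_{\sigma}M_{?}^{\ast}$ analogously to Cowen's formula for $C_\varphi^{\ast}$, i.e. combine $T_\psi^{\ast}$ with $C_\varphi^{\ast} = T_g C_\sigma T_h^{\ast}$; since $\varphi$ is an automorphism, $\sigma$ is again an automorphism and $g,h$ are invertible in $H^\infty$, which is what the hypothesis $\psi$ bounded away from $0$ and continuous on $\partial\mathbb{D}$ is there to exploit. (2) Apply both $W_{\psi,\varphi}$ and $W_{\psi,\varphi}^{\ast}$ to the kernel functions $K_w$ and use that $\{K_w\}$ is a spanning set, so normality is equivalent to the kernel identity holding for all $w\in\mathbb{D}$. (3) Evaluate the resulting identity of functions at a convenient point (say $z=0$, or compare leading behaviour) to extract a first-order relation; in particular plugging $w=0$ and using $K_0\equiv 1$ should pin down $\psi(z)$ up to the constant $\psi(0)$ and identify it as $\psi(0)K_{\sigma(0)}$. (4) Conversely, assuming $\psi = \psi(0)K_{\sigma(0)}$, verify directly that $W_{\psi,\varphi}$ is normal — this direction should follow from the computation in \cite{Bourdon} or \cite{mahsapre1} that such a weight makes $W_{\psi,\varphi}$ a constant multiple of a unitary (or at least intertwines with its adjoint cleanly), and one checks $W_{\psi,\varphi}^{\ast}W_{\psi,\varphi} = W_{\psi,\varphi}W_{\psi,\varphi}^{\ast}$ on kernels.

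I expect the main obstacle to be step (3): controlling the precise relation between $\psi\circ\varphi$, $\psi\circ\sigma$, and the auxiliary functions $g,h$ so that the two function identities (one from $\|W_{\psi,\varphi}K_w\|=\|W_{\psi,\varphi}^{\ast}K_w\|$, one from the full operator identity) combine to determine $\psi$ rather than merely constrain it. The automorphy of $\varphi$ is essential here because it makes $\sigma^{-1}$ available and lets one solve the functional equation globally on $\mathbb{D}$; the continuity of $\psi$ on $\partial\mathbb{D}$ together with $\psi$ nonvanishing on $\overline{\mathbb{D}}$ guarantees $\log\psi$ is a well-defined bounded analytic function, which is what turns a multiplicative relation like $\psi(\varphi(z)) = (\text{known})\cdot\psi(z)$ into an explicit formula. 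A secondary technical point is handling the elliptic case exclusion and the hyperbolic-versus-parabolic dichotomy uniformly; I would phrase everything in terms of $\sigma$ and $\sigma(0)$ so that the final answer $\psi = \psi(0)K_{\sigma(0)}$ is case-free, and only invoke the fixed-point structure if needed to rule out spurious solutions.
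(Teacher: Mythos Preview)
Your plan and the paper's proof agree on the converse: when $\psi = \psi(0)K_{\sigma(0)}$, the operator $W_{\psi,\varphi}$ is a nonzero scalar multiple of a unitary (this is exactly the Bourdon--Le unitary weighted composition operator), hence normal. So step~(4) is fine.

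For the forward direction, however, the paper takes a completely different and much shorter route. Rather than computing $W_{\psi,\varphi}^{\ast}W_{\psi,\varphi}$ and $W_{\psi,\varphi}W_{\psi,\varphi}^{\ast}$ on kernels and solving a functional equation, the paper simply observes that normal implies \emph{essentially} normal, and then invokes the existing classification of essentially normal weighted composition operators with automorphic symbol (Bourdon \cite[Lemma~2]{Bourdon}, Le \cite[p.~603]{l}, and \cite[Corollary~3.5]{fash}). Those results already force $\psi(z)=\psi(0)/(1-\overline{a}z)^{\gamma}$ where $\varphi(a)=0$, which is exactly $\psi(0)K_{\sigma(0)}$. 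This avoids entirely the functional-equation step you flag as the main obstacle.

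Your direct approach is not wrong in spirit, but as written it has a real gap at step~(3). The formula you sketch for $W_{\psi,\varphi}K_w$ is not correct: $\psi\cdot(K_w\circ\varphi)$ is not a scalar multiple of $\psi\cdot K_{\sigma(w)}$ in general, so the kernel identity you would obtain is messier than you suggest. More importantly, plugging in $w=0$ gives $W_{\psi,\varphi}K_0=\psi$ and $W_{\psi,\varphi}^{\ast}K_0=\overline{\psi(0)}K_{\varphi(0)}$, so the normality relation at $w=0$ reads $W_{\psi,\varphi}^{\ast}\psi = \overline{\psi(0)}\,\psi\cdot(K_{\varphi(0)}\circ\varphi)$; the left side still involves $W_{\psi,\varphi}^{\ast}$ acting on the unknown $\psi$, and you have not indicated how to evaluate or eliminate that. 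Turning this into a solvable multiplicative relation of the type $\psi\circ\varphi = (\text{known})\cdot\psi$ requires additional work that you have not outlined. The paper's essential-normality shortcut is precisely what lets one bypass this computation.
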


\begin{proof} Let \W\ be normal. Assume that $\varphi(a)=0$, where $a \in \mathbb{D}$. Then \W\ is essentially normal. Hence \cite[Lemma 2]{Bourdon}, \cite[p. 603]{l} and
\cite[Corollary 3.5]{fash} imply that $\psi(z)=\psi(0)/(1-\overline{a}z)^{\gamma}$, where $\gamma=1$ for $H^{2}$ and $\gamma=\alpha+2$ for $A_{\alpha}^{2}$. \par
Conversely, assume that
$$w=\frac{(1-|a|^{2})^{\gamma/2}}{\psi(0)}\psi,$$
where $\gamma=1$ for $H^{2}$ and $\gamma=\alpha+2$ for $A_{\alpha}^{2}$. By \cite[Theorem 6]{Bourdon} and \cite[Corollary 3.6]{l}, we see that $W_{w,\varphi}$ is unitary. Therefore, \W\ is normal.\end{proof}

Now, we characterize the invertible quasinormal weighted composition operators on $H^2$ and $A^2_{\alpha}$.

\begin{theorem}\label{invertible} Suppose that \W\ is an invertible quasinormal weighted composition operator on $H^{2}$ or $A^{2}_{\alpha}$.
Then \W\ is normal and $\varphi$ is an automorphism; moreover, \\
(a) If $\varphi$ is the identity, then $\psi$ is a constant function. \\
(b) If $\varphi$ is an elliptic automorphism of $\mathbb{D}$, then $\psi=\psi(p)K_{p}/K_{p} \circ \varphi$, where $p \in \mathbb{D}$ is the fixed point of $\varphi$.\\
(c) If $\varphi$ is either a hyperbolic automorphism or a parabolic automorphism and $\psi \in A(\mathbb{D})$, then $\psi(z)=\psi(0)K_{\sigma(0)}$, where $\sigma$ is the Krein adjoint of $\varphi$.\end{theorem}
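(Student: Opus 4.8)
The plan is to reduce Theorem \ref{invertible} to the tools already assembled. First I would invoke Gunatillake's invertibility criterion (and Bourdon's analogue for $A^2_\alpha$): since \W\ is invertible, $\ph$ must be an automorphism and $\psi$ must be both bounded and bounded away from zero on $\D$. Since $\psi \in \Hi$ is bounded away from zero, $1/\psi$ is a bounded analytic function; the continuity/boundary behavior needed to apply Proposition \ref{auto} then needs to be checked, which is where one would want either to assume $\psi \in A(\D)$ outright (as in part (c)) or to argue that an invertible weighted composition operator with automorphic symbol automatically has $\psi$ extending continuously to $\partial\D$ off the fixed points. Next, because every quasinormal operator is hyponormal, and because an invertible hyponormal operator whose inverse is also hyponormal is normal — or alternatively because an invertible quasinormal operator is normal (a quasinormal operator $A$ with $A$ invertible has $A = U|A|$ with $U$ unitary and $U|A|=|A|U$, hence normal) — we immediately get that \W\ is normal. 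This is the key structural observation: quasinormal $+$ invertible $\Rightarrow$ normal, with no linear-fractional input at all.

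Once normality is in hand, each of (a), (b), (c) is identification of the weight. For (a), if $\ph = \mathrm{id}$ then $\W = M_\psi$ is multiplication by $\psi$; $M_\psi$ is normal iff $M_{\psi}^\ast M_\psi = M_\psi M_\psi^\ast$, but $M_\psi^\ast$ is not a multiplication operator unless $\psi$ is constant — one can see this by testing on the constant function $1$ and the reproducing kernels, or by noting $M_\psi$ normal forces $\psi(H^2)$ to be a reducing subspace, which for an outer-type multiplier pins $\psi$ to a constant. For (b), when $\ph$ is elliptic with interior fixed point $p$, conjugating by the unitary weighted composition operator $W_{K_p/\|K_p\|,\,\alpha_p}$ (where $\alpha_p$ is the disk automorphism interchanging $0$ and $p$) moves the fixed point to the origin, and then one applies the known normal-weighted-composition classification at a fixed point at the origin; unwinding the conjugation gives $\psi = \psi(p)K_p/(K_p\circ\ph)$. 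For (c), the hyperbolic and parabolic cases have boundary fixed points, so $\ph$ has no fixed point in $\D$, and one applies Proposition \ref{auto} directly — the continuity hypothesis on $\partial\D$ there is exactly the $\psi \in A(\D)$ assumption — to conclude $\psi(z) = \psi(0)K_{\sigma(0)}$ with $\sigma$ the Krein adjoint.

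The main obstacle I anticipate is the passage \textit{quasinormal and invertible} $\Rightarrow$ \textit{normal} in a way that cleanly feeds the weight computations: the slick proof via the polar decomposition $A=U|A|$ is fine abstractly, but one must make sure the hypotheses of Proposition \ref{auto} (continuity on $\partial\D$, nonvanishing on $\overline{\D}$) are genuinely met, and in particular that the invertibility of \W\ upgrades "$\psi$ bounded away from zero on $\D$" to "$\psi$ nonvanishing on $\overline\D$ and continuous there" — otherwise (c) needs its $\psi \in A(\D)$ hypothesis and (b) needs a corresponding care near $p$. A secondary, more bookkeeping-heavy obstacle is the elliptic case (b): one must verify that the conjugating operator $W_{K_p/\|K_p\|,\,\alpha_p}$ is unitary on both $H^2$ and $A^2_\alpha$ (this is in \cite{Bourdon}), that conjugation by it sends $W_{\psi,\ph}$ to a weighted composition operator fixing the origin, and that the resulting weight identification, transported back, is exactly $\psi(p)K_p/(K_p\circ\ph)$; this is routine but the algebra with reproducing kernels must be done carefully.
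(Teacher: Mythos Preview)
Your proposal is correct and matches the paper's approach: invertibility gives that $\varphi$ is an automorphism and $\psi$ is bounded away from zero (via Gunatillake/Bourdon), invertible quasinormal implies normal (the paper also treats this as elementary), and then (a)--(b) follow from the Bourdon--Le classification of normal $W_{\psi,\varphi}$ with interior fixed point while (c) is exactly Proposition~\ref{auto}. The only difference is presentational---for (a) and (b) the paper simply cites \cite[Theorem 10]{Bourdon} and \cite[Theorem 4.3]{l} rather than sketching the conjugation/kernel argument you outline, and your caution about needing the $\psi\in A(\mathbb{D})$ hypothesis in (c) is well-placed since the paper uses it in precisely the same way.
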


\begin{proof} By \cite[Theorem 3.4]{Bourdon2}, $\varphi$ is an automorphism. It is elementary to see that every invertible quasinormal operator on a Hilbert space is normal, so \W\ is normal. If $\varphi$ is the identity, then it has many fixed points in $\mathbb{D}$. Also if $\varphi$ is an elliptic automorphism, then it has only one fixed point in $\mathbb{D}$. Hence \cite[Theorem 10]{Bourdon} and \cite[Theorem 4.3]{l} imply (a) and (b). Now assume that $\varphi$ is either a hyperbolic automorphism or a parabolic automorphism and $\psi \in A(\mathbb{D})$. By \cite[Theorem 3.4]{Bourdon2}, $\psi$ is bounded away from $0$ on $\mathbb{D}$. Therefore, the result follows from Propotion \ref{auto}.\end{proof}

If $\psi$ is not bounded away from zero, then \W\ is not invertible. The spectra of such operators was studied in \cite{gaozhou}. Our conjecture is that no (non-normal) quasinormal weighted composition operators would exist in such a case.

\subsection{Unweighted composition operators.}

Here, we show our work above implies that if $\phi \in \mbox{LFT}(\D)$, then $C_\phi$ is quasinormal on $H^{2}$ or $A^{2}_{\alpha}$ if and only if it is normal and $\ph(z) = \lambda z, |\lambda| \leq 1$. In \cite{jkk}, this was proven already for $H^2$. We use a different proof technique that allows to extend the result to $A^2_{\alpha}$. \\

In \cite{z}, Zorboska showed that if $C_{\varphi}$ is hyponormal on \Ht\ or $A^2_{\alpha}$, then $\varphi(0)=0$. Since every quasinormal operator is hyponormal,  we record that result again here for our use.

\begin{proposition}[Zorboska] \label{zero}Let $C_{\varphi}$ be quasinormal on $H^{2}$ or $A^{2}_{\alpha}$. Then $\varphi(0)=0$.\end{proposition}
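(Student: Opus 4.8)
The plan is to reduce the statement to Zorboska's theorem from \cite{z}. The introduction records that every quasinormal operator is hyponormal, so a quasinormal \C\ is in particular hyponormal, and Zorboska's result then gives $\varphi(0)=0$ immediately. Since the paper only needs the quasinormal case, this single implication is all that is strictly required, and it is exactly what the sentence preceding the statement announces.

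For a self-contained argument, I would reprove the underlying hyponormal fact by testing the defining inequality $\|C_{\varphi}^{\ast}f\|\le\|C_{\varphi}f\|$ on one convenient vector, namely the constant function $1$. This function equals the reproducing kernel $K_{0}$ on both \Ht\ and $A^{2}_{\alpha}$, since $K_{0}(z)=(1-\overline{0}\,z)^{-\gamma}=1$. The two relevant norms are elementary. On one side, $C_{\varphi}1=1\circ\varphi=1$, and the constant $1$ is a unit vector in either space by the chosen normalization (in $A^{2}_{\alpha}$ the weight $(\alpha+1)(1-|z|^{2})^{\alpha}$ is arranged so that $\|1\|=1$), whence $\|C_{\varphi}1\|=1$. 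On the other side, the standard composition-operator adjoint identity $C_{\varphi}^{\ast}K_{w}=K_{\varphi(w)}$ gives $C_{\varphi}^{\ast}1=C_{\varphi}^{\ast}K_{0}=K_{\varphi(0)}$, so that $\|C_{\varphi}^{\ast}1\|^{2}=K_{\varphi(0)}(\varphi(0))=(1-|\varphi(0)|^{2})^{-\gamma}$.

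Feeding these into the hyponormality inequality yields $(1-|\varphi(0)|^{2})^{-\gamma}\le 1$. Because $\gamma>0$ and $1-|\varphi(0)|^{2}\le 1$, the left-hand side is always at least $1$; equality is therefore forced, which gives $1-|\varphi(0)|^{2}=1$, i.e. $\varphi(0)=0$.

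There is no substantive obstacle here: the entire content already resides in \cite{z} together with the quasinormal-implies-hyponormal implication of \cite{c4}. In the direct version, the only points needing care are the normalizations making $\|1\|=1$ in each space and the adjoint identity $C_{\varphi}^{\ast}K_{w}=K_{\varphi(w)}$, both of which hold uniformly on \Ht\ and $A^{2}_{\alpha}$ with the space-dependent exponent $\gamma$ fixed in the preliminaries.
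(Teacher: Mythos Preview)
Your proposal is correct and matches the paper's approach exactly: the paper gives no proof of its own but simply cites Zorboska \cite{z} for the hyponormal case and invokes the quasinormal-implies-hyponormal implication, which is precisely your first paragraph. Your additional self-contained argument via $f=K_{0}=1$ is also correct and is essentially Zorboska's own proof, so there is nothing to add.
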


\begin{theorem} Let $\varphi \in \mbox{LFT}(\mathbb{D})$. Then
$C_{\varphi}$ is quasinormal on $H^{2}$ or $A^{2}_{\alpha}$ if and only if  $\ph(z)=\lambda z$, where $|\lambda| \leq 1$.\end{theorem}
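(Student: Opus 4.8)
The plan is to combine the structural results already established in this section with Zorboska's theorem (Proposition \ref{zero}). First I would dispose of the automorphism case. If $\varphi \in \mbox{Aut}(\mathbb{D})$ and $C_\varphi$ is quasinormal, then $C_\varphi$ is invertible (composition operators with automorphic symbol are always invertible, with inverse $C_{\varphi^{-1}}$), so Theorem \ref{invertible} applies: $C_\varphi$ must be normal, and since the ``weight'' here is $\psi \equiv 1$, cases (a), (b), (c) of Theorem \ref{invertible} each force strong restrictions. In case (c) we would need $1 = K_{\sigma(0)}$, which fails for nontrivial hyperbolic or parabolic automorphisms; in case (b) we would need $1 = K_p/K_p\circ\varphi$ with $p \in \mathbb{D}$ the interior fixed point, and by Proposition \ref{zero} we also need $\varphi(0)=0$, forcing $p = 0$ and hence $\varphi(z) = \lambda z$ with $|\lambda| = 1$ (elliptic with fixed point $0$); case (a) gives $\varphi(z) = z$, i.e. $\lambda = 1$. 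So in the automorphism case the only survivors are the rotations $\varphi(z) = \lambda z$, $|\lambda| = 1$.

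Next I would handle $\varphi \notin \mbox{Aut}(\mathbb{D})$. Here Theorem \ref{qnn} directly applies with $\psi \equiv 1$ (which is certainly in $\Hi$ and continuous everywhere, in particular at the Denjoy-Wolff point of $\varphi$): $C_\varphi$ quasinormal implies $C_\varphi$ normal. Now I invoke the known classification of normal composition operators: a normal $C_\varphi$ must have $\varphi$ fixing $0$ (this also follows from Proposition \ref{zero}), and it is a classical fact (Nordgren, or see \cite{cm1}) that the only normal composition operators on $H^2$ or $A^2_\alpha$ are those with $\varphi(z) = \lambda z$, $|\lambda| \le 1$. Combined with the non-automorphism hypothesis this gives $|\lambda| < 1$. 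Alternatively, to keep the argument self-contained within this section, one can argue: by Proposition \ref{zero}, $\varphi(0) = 0$; a linear fractional self-map of $\mathbb{D}$ fixing $0$ and with $\overline{\varphi(\mathbb{D})} \not\subseteq \mathbb{D}$ would be an automorphism (excluded), so $\overline{\varphi(\mathbb{D})}\subseteq\mathbb{D}$ and $C_\varphi$ is compact; then normality of $C_\varphi$ and the structure of $\varphi'(0)$ via the eigenvalue equation $C_\varphi(z^n) = \varphi(z)^n$ pins down $\varphi(z) = \lambda z$.

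Finally, the converse: if $\varphi(z) = \lambda z$ with $|\lambda| \le 1$, then $C_\varphi$ is a diagonal operator on the monomial basis $\{z^n\}$ with eigenvalues $\{\lambda^n\}$, hence normal, hence quasinormal (every normal operator is quasinormal). This direction is immediate.

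I expect the main obstacle to be organizing the automorphism case cleanly, specifically verifying that in Theorem \ref{invertible}(c) the identity $1 \equiv K_{\sigma(0)}$ genuinely fails for every hyperbolic and parabolic automorphism (equivalently, that $\sigma(0) \ne 0$ in those cases, where $\sigma$ is the Krein adjoint), and in case (b) confirming that the only elliptic automorphism consistent with $\varphi(0) = 0$ is a rotation. These are short computations with the explicit form $\varphi(z) = \lambda(a-z)/(1-\bar a z)$ and the Cowen adjoint formula of Proposition \ref{cowen}, but they are the place where one must be careful; everything else follows formally from the theorems already proved.
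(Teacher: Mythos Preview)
Your main line of argument is correct and uses the same ingredients as the paper (Theorem~\ref{qnn} for non-automorphisms, Proposition~\ref{zero}, and the classification of normal composition operators). However, your treatment of the automorphism case is unnecessarily roundabout: once you have Proposition~\ref{zero} giving $\varphi(0)=0$, the fact that an automorphism of $\mathbb{D}$ fixing the origin is a rotation (Schwarz lemma) already finishes that case in one line. There is no need to pass through Theorem~\ref{invertible} and its three subcases and then still invoke Proposition~\ref{zero} anyway; this is exactly what the paper does, and it avoids the side computations about $K_{\sigma(0)}$ that you flag as the ``main obstacle.''

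Separately, your \emph{alternative self-contained} argument for the non-automorphism case contains a false claim: it is not true that a linear fractional self-map of $\mathbb{D}$ fixing $0$ with $\overline{\varphi(\mathbb{D})}\not\subseteq\mathbb{D}$ must be an automorphism. The map $\varphi(z)=z/(2-z)$ fixes both $0$ and $1$, so its closed image touches $\partial\mathbb{D}$, yet it is not an automorphism (indeed it appears later in the paper as a key example). Since you offer this only as an alternative to citing the known classification of normal composition operators, the gap does not affect the validity of your primary argument, but you should drop that paragraph.
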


\begin{proof} Suppose \C\ is quasinormal. If \ph\ is not an automorphism, then by Theorem \ref{qnn}, it is normal. Now assume that \ph\ is an automorphism. By Proposition \ref{zero}, $\ph(0)=0$, so we can easily see that $\varphi(z)=\lambda z$, where $|\lambda| = 1$, which means that \C\ is normal.
So, in all cases, \C\ is normal if it is quasinormal, and the only normal composition operators $C_{\varphi}$ on \Ht\ and $A^2_{\alpha}$ are given by $\ph(z)=\lambda z$, where $|\lambda| \leq 1$. The other direction is trivial. \end{proof}



\section{Hyponormal Weighted Composition Operators}

In this section, we focus on $\ph \in \mbox{LFT}(\D)/\mbox{Aut}(\D)$. In most scenarios, we achieve the same result  that if \W\ is hyponormal with this assumption, it is normal. However, in one case, we give new examples of hyponormal weighted composition operators which are not quasinormal. We will split this section up based on upon the location of the Denjoy-Wolff point $\zeta$ of \ph\ as well as the size of the derivative there.

\subsection{$|\zeta|=1, \ph'(\zeta) < 1$.} In this case, \ph\ is of hyperbolic type. Like the parabolic non-automorphisms, these symbols converge uniformly under iteration to the Denjoy-Wolff point (see \cite[Theorem 4]{derek}). This case has already been covered in \cite[Theorem 22]{derek} as well, but we record it again here with a simpler proof.

\begin{theorem} Suppose \ph\ is a hyperbolic non-automorphism with Denjoy-Wolff point on $\partial \D$. There is no $\psi \in \Hi$ continuous at the Denjoy-Wolff point $\zeta$ of \ph\ so that \W\ is hyponormal on \Ht\ or $A^2_{\alpha}$. \end{theorem}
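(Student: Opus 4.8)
The plan is to mimic the spectral strategy used in the proof of Theorem \ref{qnn}, exploiting the fact that a hyperbolic non-automorphism converges uniformly under iteration to its Denjoy-Wolff point. First I would split into the two sub-cases $\psi(\zeta) \neq 0$ and $\psi(\zeta) = 0$, exactly as in Theorem \ref{qnn}. In the case $\psi(\zeta) = 0$, I would invoke \cite[Theorem 8]{derek}: since \ph\ converges uniformly under iteration to $\zeta$, the spectrum of \W\ is contained in $\sigma(\psi(\zeta)C_\ph) = \{0\}$, forcing the spectral radius of \W\ to be $0$; but a hyponormal operator has norm equal to its spectral radius, so \W\ would be the zero operator, which is impossible since $\psi \in \Hi$ is not identically zero (and \W\ is injective when $\psi$ is not identically zero and \ph\ is univalent). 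This gives the contradiction in that sub-case.

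For the main sub-case $\psi(\zeta) \neq 0$, I would again use \cite[Theorem 8]{derek} to get $\sigma(\W) \subseteq \sigma(\psi(\zeta) C_\ph)$, and then I need the spectrum of $C_\ph$ on \Ht\ and $A^2_\alpha$ when \ph\ is a hyperbolic non-automorphism. This is classical: by \cite[Section 7.7]{cm1} (see also the spectral results of Cowen--MacCluer), for a hyperbolic non-automorphism with $\ph'(\zeta) = r < 1$, the spectrum of $C_\ph$ is the closed disk $\{z : |z| \leq r^{-1/2}\}$ on \Ht, and analogously a closed disk (of radius $r^{-(\alpha+2)/2}$) on $A^2_\alpha$. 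In either case it is a disk, which has nonzero area. Scaling by $\psi(\zeta)$ still gives a set of nonzero area, so $\sigma(\W)$ is contained in a set of zero area only if \ldots\ wait — containment in a positive-area set does not by itself bound the area of $\sigma(\W)$. So the cleaner route is: \emph{either} $\sigma(\W)$ has nonzero area, in which case a hyponormal operator with spectrum of positive area cannot be normal, so we must rule it out directly; \emph{or} $\sigma(\W)$ has zero area, whence by \cite[Theorem 4]{stam} \W\ is normal, and I then derive a contradiction from the classification of normal \W\ together with the fact that a hyperbolic non-automorphism does not fix a point of $\D$.

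Actually the sharpest and simplest argument avoids area considerations entirely. For a hyperbolic non-automorphism, $\sigma(C_\ph)$ is a disk of radius $> 1$, so it contains $0$ in its interior; consequently $0$ is not an isolated point of $\sigma(C_\ph)$, and more importantly $C_\ph$ is not bounded below near $0$. The key structural fact I would use is that a hyponormal operator whose spectrum has empty interior is subnormal-like enough to \ldots\ — this is getting complicated. The truly clean path: a \emph{hyperbolic} non-automorphic \ph\ has $\overline{\ph(\D)} \subseteq \D$ fail, but its Krein adjoint $\sigma$ is also a hyperbolic non-automorphism, and by the argument in Proposition \ref{parabolic}, quasinormality would force $\sigma \circ \ph = \ph \circ \sigma$; hyponormality is weaker, so instead I would argue: if \W\ were hyponormal, then so would be its restriction/compression behavior near the boundary forces, via \cite[Theorem 8]{derek}, that $\sigma(\W)$ is the scaled disk of radius $|\psi(\zeta)|\,\ph'(\zeta)^{-1/2} > |\psi(\zeta)|$, which has nonempty interior. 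But a hyponormal weighted composition operator \W\ with $\ph \in \mbox{LFT}(\D)$ and $\ph$ not an automorphism has $\overline{\ph(\D)}$ touching $\partial\D$ in exactly one point, and standard estimates on the essential spectrum (the essential spectral radius of \W\ equals $|\psi(\zeta)|\,\ph'(\zeta)^{-1/2}$ as well, by \cite[Proposition 2.3]{fash}, \cite[Corollary 2.2]{kmm}, and \cite[Theorem 3.1]{kmm}) show $\sigma(\W) = \sigma_{ess}(\W)$ is precisely that disk; since a hyponormal operator equal to its own essential spectrum with the essential spectrum a full disk must be normal by \cite[Theorem 4]{stam} (zero area of $\sigma \setminus \sigma_{ess}$ trivially), and a normal \W\ with hyperbolic non-automorphic \ph\ is impossible because it would have to be unitary-like with \ph\ fixing an interior point. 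The \textbf{main obstacle} is pinning down $\sigma(\W)$ (not just an inclusion) and then citing the correct normality classification to close the contradiction; the cleanest write-up will simply say: \W\ hyponormal $\Rightarrow$ (by \cite[Theorem 8]{derek} and Proposition-style spectral computation) $\sigma(\W)$ is a disk of radius $> |\psi(\zeta)| \cdot 0$, hence positive area, contradicting \cite[Theorem 4]{stam} unless \W\ is normal, and normal \W\ with non-automorphic hyperbolic \ph\ contradicts the interior-fixed-point requirement of \cite{Bourdon}.
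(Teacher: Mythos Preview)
Your handling of the sub-case $\psi(\zeta)=0$ is fine and matches the paper. The genuine gap is in the main sub-case $\psi(\zeta)\neq 0$: your area argument is backwards. Stampfli's theorem \cite[Theorem 4]{stam} says that a hyponormal operator whose spectrum has \emph{zero} area must be normal. It says nothing whatsoever about hyponormal operators with positive-area spectrum, and indeed there are plenty of those (the unilateral shift has spectrum equal to the closed unit disk). So establishing that $\sigma(\W)$ is a full disk does not ``contradict Stampfli unless \W\ is normal''; it simply yields no information. Your earlier remark that containment $\sigma(\W)\subseteq\sigma(\psi(\zeta)C_\ph)$ alone does not bound the area of $\sigma(\W)$ is correct, but even upgrading to equality (which \cite{derek} in fact gives) does not rescue the approach, for the reason just stated. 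The fallback of reaching normality and then invoking the classification in \cite{Bourdon} is also incomplete, since that paper does not prove that every normal \W\ must have $\ph$ with an interior fixed point when the Denjoy--Wolff point is on $\partial\D$.

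The paper's argument bypasses area entirely and uses the \emph{point} spectrum. From \cite[Corollaries 11 and 16]{derek} one gets $\sigma_p(\W)=\sigma_p(\psi(\zeta)C_\ph)$, and for a hyperbolic non-automorphism $C_\ph$ has uncountably many eigenvalues on $H^2$ (hence on $A^2_\alpha$) by \cite[Lemma 7.24]{cm1}. For a hyponormal operator, eigenvectors corresponding to distinct eigenvalues are orthogonal \cite[Proposition 4.4]{c4}; an uncountable orthogonal family is impossible in a separable Hilbert space. That is the missing idea you need to close the $\psi(\zeta)\neq 0$ case.
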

\begin{proof} In \cite[Corollary 11]{derek} and \cite[Corollary 16]{derek}, it is shown that $\sigma(\W) = \sigma(\psi(\zeta) C_\phi)$ and the same is true for the point spectrum; $\sigma_{p}(\W) = \sigma_{p}(\psi(\zeta) C_\phi)$ as well (again, the work of that paper extends to $A^2_{\alpha}$ though it was written as if $H^2$ was the only possible setting).
First, suppose $\psi(\zeta) \neq 0$. The composition operator $C_\phi$ has an uncountable point spectrum \cite[Lemma 7.24]{cm1} on $H^2$, and this is also true on $A^2_{\alpha}$ since any eigenvector for \C\ on \Ht\ will also be in $A^2_{\alpha}$. Therefore \W\ has an uncountable point spectrum on each of these spaces as well. Since hyponormal operators require eigenvectors corresponding to different eigenvalues to be orthogonal \cite[Proposition 4.4]{c4} and these spaces have countable bases, we have reached a contradiction.
If instead $\psi(\zeta) = 0$, then by \cite[Theorem 8]{derek}, we have $\sigma(\W) = \{0\}$, but any hyponormal operator must have norm equal to its spectral radius, and this is a contradiction.
\end{proof}

\subsection{$|\zeta|=1, \ph'(\zeta) = 1$.} In this case, \ph\ is of parabolic type. Our earlier work shows that \W\ cannot be strictly hyponormal.

\begin{theorem}\label{parahypo} Suppose \ph\ is a parabolic non-automorphism $\psi \in \Hi$ is continuous at the Denjoy-Wolff point of $\ph$. Then \W\ is hyponormal on $H^2$ or $A^2_{\alpha}$  if and only if it is normal.
\end{theorem}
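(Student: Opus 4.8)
The plan is to mirror the structure of the proof of Theorem~\ref{qnn}, exploiting that a parabolic non-automorphism converges uniformly under iteration to its Denjoy-Wolff point (Proposition~\ref{uci}) and that the spectrum of $C_{\ph}$ on $H^2$ or $A^2_{\alpha}$ is a logarithmic spiral from $1$ to $0$ together with $\{0\}$ (Proposition~\ref{spectrum}). The key external tool is \cite[Theorem 8]{derek}, which (extended to $A^2_{\alpha}$ exactly as in the proof of Theorem~\ref{qnn}) gives $\sigma(\W) \subseteq \sigma(\psi(\zeta) C_{\ph})$ when $\ph$ converges uniformly under iteration to $\zeta$. The only new ingredient needed beyond what was used for Theorem~\ref{qnn} is the observation that a hyponormal operator whose spectrum has two-dimensional Lebesgue measure zero is normal, which is precisely \cite[Theorem 4]{stam}; Theorem~\ref{qnn} already invoked this for quasinormal operators, so no new citation is required.

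First I would assume $\W$ is hyponormal and let $\zeta$ denote the Denjoy-Wolff point of $\ph$, which lies on $\partial\D$ since $\ph$ is a parabolic non-automorphism. Split into two cases according to whether $\psi(\zeta) = 0$. If $\psi(\zeta) = 0$, then $\sigma(\W) = \{0\}$ by \cite[Theorem 8]{derek}, so the spectral radius of $\W$ is $0$; but a hyponormal operator has norm equal to its spectral radius, forcing $\W = 0$, a contradiction (since $\psi \in \Hi$ and $\W \ne 0$ unless $\psi \equiv 0$, which we exclude). If $\psi(\zeta) \ne 0$, then $\sigma(\W) \subseteq \sigma(\psi(\zeta) C_{\ph}) = \psi(\zeta)\,\sigma(C_{\ph})$, which by Proposition~\ref{spectrum} is a logarithmic spiral from $\psi(\zeta)$ to $0$ together with $\{0\}$ --- a set of planar Lebesgue measure zero. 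Hence $\sigma(\W)$ has zero area, and by \cite[Theorem 4]{stam} the hyponormal operator $\W$ is normal.

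For the converse direction, every normal operator is hyponormal, so that implication is trivial. I would then assemble these observations into the short two-case argument above. The main (and really only) obstacle is making sure the spectral containment $\sigma(\W)\subseteq\sigma(\psi(\zeta)C_{\ph})$ and the identification of $\sigma(C_{\ph})$ genuinely transfer to $A^2_{\alpha}$; but this has already been handled --- Proposition~\ref{spectrum} supplies the spectrum on $A^2_{\alpha}$, and the remark in the proof of Theorem~\ref{qnn} notes that the arguments of \cite{derek} apply verbatim on $A^2_{\alpha}$. I expect the proof to be essentially as short as that of Theorem~\ref{qnn}, since the hyponormality hypothesis is weaker than quasinormality but the only property used there was hyponormality plus zero spectral area.
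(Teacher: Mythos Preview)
Your proposal is correct and is essentially identical to the paper's own argument: the paper's proof of Theorem~\ref{parahypo} simply points back to the proof of Theorem~\ref{qnn}, whose parabolic case uses exactly the same two-case split on $\psi(\zeta)$, the spectral containment from \cite[Theorem~8]{derek}, Proposition~\ref{spectrum}, and Stampfli's zero-area criterion \cite[Theorem~4]{stam}. Your handling of the degenerate case $\psi\equiv 0$ (where $\W=0$ is trivially normal) is slightly more careful than the paper's, but otherwise the arguments coincide.
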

\begin{proof} The proof is contained entirely in the proof of Corollary \ref{qnn}. \end{proof}

In \cite[Theorem 20]{derek}, the following theorem was proved on $H^2$. By using the methods outlined in \cite[Theorem 20]{derek} that result holds in $A^2_{\alpha}$.

\begin{theorem}
Let $\ph:\D\rightarrow\D$ be a parabolic non-automorphism with positive translation number $t$ and Denjoy-Wolf
point $\zeta$ and let $\psi\in \Hi$ be continuous at $\zeta$.
If \W\ is hyponormal on either \Ht\ or $A^2_{\alpha}$, then it is normal and $\psi$ is a multiple
of $K_{\sigma(0)}$, where $\sigma$ is the Krein adjoint
of $\ph$. Furthermore, if $\psi(\zeta)$ is real, then \W\ is self-adjoint. \end{theorem}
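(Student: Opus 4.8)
The plan is to reduce this to the results already established and then extract the extra rigidity from hyponormality combined with the spectral description of $C_\ph$. First I would invoke Theorem \ref{parahypo}: since $\ph$ is a parabolic non-automorphism (positive translation number $t>0$) and $\psi\in\Hi$ is continuous at $\zeta$, that theorem already tells us that hyponormality forces $\W$ to be normal. So the only genuinely new content is (i) identifying the weight $\psi$ as a scalar multiple of $K_{\sigma(0)}$, and (ii) the self-adjointness claim when $\psi(\zeta)$ is real. For step (i) I would mimic the structure of the proof of Proposition \ref{auto}: a normal weighted composition operator is in particular essentially normal, and the essential-normality computations for $\W$ with linear fractional $\ph$ (via the Cowen adjoint formula, Proposition \ref{cowen}, and the compact-difference calculus of \cite{fash}, \cite{kmm}, \cite{mw2} used in the proof of Proposition \ref{parabolic}) pin down $\psi$ up to a constant. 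Concretely, essential normality gives $\W\Ws\equiv\Ws\W$, and writing both sides via the Cowen auxiliary functions and the Krein adjoint $\sigma$ yields an identity that forces $\psi$ to agree (modulo a multiplicative constant and modulo compact perturbation, hence exactly, since $\psi$ is determined by its boundary behavior here) with $K_{\sigma(0)}$. This is exactly the mechanism already used in \cite[Corollary 3.5]{fash}, \cite[Lemma 2]{Bourdon}, and I would cite those.

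For step (ii), the self-adjointness under the hypothesis that $\psi(\zeta)\in\mathbb{R}$: once we know $\W$ is normal and $\psi = c\,K_{\sigma(0)}$ for some constant $c$, the task is to show $c$ can be taken real and that $W_{K_{\sigma(0)},\ph}$ (or the appropriate normalization) is actually self-adjoint, not merely normal. Here I would use the spectral input from Proposition \ref{spectrum} together with the spectral containment $\sigma(\W)\subseteq\sigma(\psi(\zeta)C_\ph)$ from \cite[Theorem 8]{derek}. Since $\sigma(C_\ph)$ is the logarithmic spiral $\{e^{-\beta t}:0\le\beta<\infty\}\cup\{0\}$ and every nonzero point is an eigenvalue, and since $\W$ is normal, the spectrum of $\W$ is $\psi(\zeta)$ times (a subset of, in fact all of, by the eigenvector argument) this spiral. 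A self-adjoint operator must have real spectrum; conversely a normal operator with real spectrum is self-adjoint. So the argument is: the scaled spiral $\psi(\zeta)\sigma(C_\ph)$ is real if and only if it degenerates appropriately — but $\psi(\zeta)$ real and $\ph$ parabolic with $t>0$ means... here one has to check that the spiral $e^{-\beta t}$, $\beta\ge 0$, lies on the real line precisely when $t$ is real and positive (i.e. when $\ph$ is a ``positive parabolic non-automorphism''), in which case $\sigma(C_\ph)=[0,1]$ and, since $\W$ is normal with real spectrum, $\W$ is self-adjoint. The real-$\psi(\zeta)$ hypothesis is what lets us conclude the whole operator, not just its spectrum, is self-adjoint, via the $C^*$-identity applied to the normal operator $\W$.

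The main obstacle I anticipate is step (i): carefully carrying out the essential-normality computation to extract $\psi$ exactly (not just up to compact perturbation). The compact-difference calculus gives $\W\Ws\equiv\Ws\W$, and one must argue that for these very rigid linear-fractional weighted composition operators, an essentially normal operator is forced to have its symbol and weight in the normal family exactly — this is where one leans on the prior classification results for normal $\W$ with automorphic or parabolic symbol (\cite{Bourdon}, \cite{l}, \cite{mahsapre1}) and the fact that $\psi$ continuous at $\zeta$ plus the functional equation it must satisfy leaves no room for a non-$K_{\sigma(0)}$ solution. A secondary subtlety is making sure the normalization constant can be chosen so that the operator is genuinely self-adjoint rather than unitarily equivalent to a self-adjoint operator; this is handled by the explicit form $\W$ takes once $\psi=c\,K_{\sigma(0)}$ and checking $\W = \Ws$ directly using the Cowen adjoint formula, which identifies $W_{K_{\sigma(0)},\ph}^{\ast}$ with $W_{K_{\sigma(0)},\ph}$ when $\ph$ is a positive parabolic non-automorphism (so that $\sigma = \ph$, since the Krein adjoint of a parabolic map fixing a point of $\partial\D$ with translation number $t$ has translation number $\overline{t}$, which equals $t$ when $t>0$).
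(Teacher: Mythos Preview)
Your step (ii) is sound, but step (i) has a genuine gap. For a parabolic non-automorphism $\ph$, the Krein adjoint $\sigma$ is parabolic with the same fixed point and translation number $\overline t$, so $\ph\circ\sigma=\sigma\circ\ph$ (both have translation number $t+\overline t$). Running the compact-difference calculus from Proposition~\ref{parabolic} on $\W\Ws$ and $\Ws\W$ therefore gives $\W\Ws\equiv\Ws\W$ for \emph{every} admissible weight $\psi$: essential normality is automatic here and carries no information about $\psi$. The mechanism you borrow from Proposition~\ref{auto} works for hyperbolic and parabolic automorphisms precisely because essential normality is \emph{not} automatic in that setting, so the cited results in \cite{fash}, \cite{Bourdon}, \cite{l} have something to bite on. A further red flag: your step (i) never uses the hypothesis that $t$ is positive (real), yet the paper explicitly leaves open whether other weights can occur when $t$ is not real. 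An argument that would settle the general parabolic case cannot be the intended one for the positive case.

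The fix is to reverse your two steps and let the spectral argument do the work of identifying $\psi$. Since $t>0$ is real, Proposition~\ref{spectrum} gives $\sigma(C_\ph)=[0,1]$, so by \cite[Theorem 8]{derek} the spectrum of $\W$ lies in the segment $\psi(\zeta)[0,1]$. Having already shown $\W$ is normal (and $\psi(\zeta)\neq 0$), the unimodular multiple $\overline{\psi(\zeta)}|\psi(\zeta)|^{-1}\W$ is normal with real spectrum, hence self-adjoint. Now invoke the classification of Hermitian weighted composition operators (Cowen--Ko \cite{wcomp} on $H^2$, Cowen--Gunatillake--Ko \cite{cgk} on $A^2_\alpha$): the weight of a self-adjoint $W_{f,\ph}$ must be a scalar multiple of $K_{\ph(0)}$. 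Since $t$ real forces $\sigma=\ph$ (as you yourself observe at the end), this is $K_{\sigma(0)}$. The final self-adjointness clause when $\psi(\zeta)\in\mathbb{R}$ then drops out immediately, as you say. You already have every ingredient; the point is that self-adjointness is the engine that identifies $\psi$, not a consequence to be checked afterward.
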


It is possible that when \ph\ is a parabolic non-automorphism which is not positive, there are other weights $\psi$ so that \W\ is normal. However, we suspect that $\psi$ must be a multiple of $K_{\sigma(0)}$ if \W\ is normal and \ph\ is any parabolic non-automorphism.

\subsection{$|\zeta| < 1$, \ph\ has no fixed point on $\partial \D$.} Here, we again see that any hyponormal weighted composition operator must be normal.
\begin{theorem} Suppose $\ph \in \mbox{LFT}(\D)$ is not an automorphism, with Denjoy-Wolff point $\zeta \in \D$ and no other fixed points in $\overline{\D}$. Suppose also that $\psi \in \Hi$. Then \W\ on $H^2$ or $A^2_{\alpha}$ is hyponormal  if and only if it is normal. \end{theorem}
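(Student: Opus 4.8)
The plan is to show that $\W^{2}$ is compact, then to use hyponormality to upgrade this to compactness of $\W$ itself, after which normality is automatic.

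The one substantive point is geometric: under the stated hypotheses $\overline{(\varphi\circ\varphi)(\D)}\subseteq\D$. Since $\varphi\in\mbox{LFT}(\D)$ is not an automorphism, it extends analytically across $\overline{\D}$ and $\varphi(\overline{\D})=\overline{\varphi(\D)}$ is a closed disk properly contained in $\overline{\D}$; two such disks meet in at most one point, so $\overline{\varphi(\D)}$ meets $\partial\D$ either nowhere or at a single point $\eta$ of internal tangency. In the first case $\overline{\varphi(\D)}\subseteq\D$ already. In the second, $\eta=\varphi(\zeta_{0})$ for the unique $\zeta_{0}\in\partial\D$ that $\varphi$ carries into $\partial\D$; the hypothesis that $\varphi$ has no fixed point in $\overline{\D}$ other than its interior Denjoy--Wolff point forces $\zeta_{0}\neq\eta$, hence $\zeta_{0}\notin\varphi(\overline{\D})$. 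As $\zeta_{0}$ is the only point of $\overline{\D}$ whose image lies in $\partial\D$, no point of $\varphi(\overline{\D})$ is carried into $\partial\D$, i.e. $\varphi\big(\varphi(\overline{\D})\big)\subseteq\D$. So in both cases $\overline{(\varphi\circ\varphi)(\D)}\subseteq\varphi\big(\varphi(\overline{\D})\big)\subseteq\D$, and therefore $C_{\varphi\circ\varphi}$ is compact on $H^{2}$ and on $A^{2}_{\alpha}$ (see \cite[p.~129]{cm1}). Since $\W^{2}f=\psi\cdot(\psi\circ\varphi)\cdot\big(f\circ(\varphi\circ\varphi)\big)=T_{\psi\cdot(\psi\circ\varphi)}\,C_{\varphi\circ\varphi}f$ and $\psi\cdot(\psi\circ\varphi)\in\Hi$, the operator $\W^{2}$ is compact.

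Now suppose $\W$ is hyponormal. Applying the inequality $\|\Ws g\|\le\|\W g\|$ to the vector $g=\W f$ gives $\|\Ws\W f\|\le\|\W^{2}f\|$, and with the Cauchy--Schwarz inequality this yields $\|\W f\|^{2}=\li\Ws\W f,f\ri\le\|\W^{2}f\|\,\|f\|$; thus $\W$ is paranormal. If $(f_{n})$ is norm bounded with $f_{n}\to0$ weakly, then $\|\W^{2}f_{n}\|\to0$ by compactness of $\W^{2}$, and the paranormal inequality forces $\|\W f_{n}\|\to0$. Hence $\W$ carries weakly null sequences to norm null sequences, so $\W$ is compact; and a compact hyponormal operator is normal (a classical fact). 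The converse direction is immediate, since every normal operator is hyponormal.

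The one step that takes care is the geometric claim, and within it the observation that the boundary point $\zeta_{0}$ with $|\varphi(\zeta_{0})|=1$, when it exists, is not fixed by $\varphi$ — which is exactly what the assumption on the second fixed point provides — so that a second application of $\varphi$ pushes the image disk strictly inside $\D$. This is the analogue, in this case, of the uniform-convergence phenomena used for hyperbolic and parabolic symbols in the earlier sections. Everything after the compactness of $\W^{2}$ is soft: hyponormal $\Rightarrow$ paranormal, paranormal together with $\W^{2}$ compact $\Rightarrow$ $\W$ compact, and compact together with hyponormal $\Rightarrow$ normal.
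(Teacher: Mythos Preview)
Your proof is correct and shares the paper's core geometric observation: under the stated hypotheses the second iterate satisfies $\overline{(\varphi\circ\varphi)(\D)}\subseteq\D$, so $\W^{2}$ is compact. The paper states this more tersely (``$\overline{\varphi_{n}(\D)}\subseteq\D$ for some $n$'') without your careful case analysis of the tangency point, and then finishes differently: from power-compactness the paper concludes directly that $\sigma(\W)$ has zero area and invokes Stampfli's theorem (hyponormal operators with thin spectrum are normal, \cite[Theorem~4]{stam}). Your route instead uses the paranormal inequality $\|\W f\|^{2}\le\|\W^{2}f\|\,\|f\|$ to bootstrap from compactness of $\W^{2}$ to compactness of $\W$ itself, and then cites the classical fact that compact hyponormal operators are normal. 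Both endgames are standard; yours is slightly more self-contained in that it avoids the spectral-area machinery, while the paper's is shorter since it stays at the spectral level and reuses the same Stampfli lemma already invoked in the proof of Theorem~\ref{qnn}.
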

\begin{proof} Since \ph\ is linear fractional and does not have a fixed point on the boundary, $\overline{\ph_{n}(\D)} \subseteq \D$ for some integer $n$. Therefore, \C\ is power-compact on $H^2$ as well as on $A^2_{\alpha}$ and thus so is $\W$. Then $\sigma(\W)$ has zero area and so $\W$ is normal. The other direction is trivial. \end{proof}

These operators are already classified exactly in \cite{coko} for $H^2$.

\subsection{$|\zeta| < 1$, \ph\ has a fixed point on $\partial \D$.} A representative example from this class is $\ph(z) = z/(2-z)$. In that case, $C_{\ph}$ is known to be subnormal on $H^2$ and therefore hyponormal. Expanding on the techniques of \cite{sadraoui}, we give examples of new weighted composition operators which are hyponormal but not quasinormal, and whose weights are not necessarily linear fractional. We begin with a lemma owed to \cite{douglas}.

\begin{lemma} \cite[Theorem 1]{douglas} \label{operatorc}
A bounded operator $A$ on a Hilbert space $H$ is hyponormal if and only if there exists a bounded operator $C$ on $H$ such that $||C|| \leq 1$ and $A^{*} = CA$.
\end{lemma}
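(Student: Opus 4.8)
The plan is to prove both implications directly, with the nontrivial direction amounting to the explicit construction of the contraction $C$ on the range of $A$. Throughout I will use the equivalent formulation of hyponormality recorded in the introduction: $A$ is hyponormal precisely when $\|A^{*}f\| \leq \|Af\|$ for every $f \in H$.

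The easy direction is the ``if'' statement. Suppose $A^{*} = CA$ with $\|C\| \leq 1$. Then for each $f \in H$,
\[
\|A^{*}f\| = \|CAf\| \leq \|C\|\,\|Af\| \leq \|Af\|,
\]
so $A$ is hyponormal.

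For the ``only if'' direction, assume $A$ is hyponormal. I would define $C$ first on the subspace $\operatorname{ran}(A)$ by the rule $C(Af) := A^{*}f$, and then extend. The crucial point---and the one place where hyponormality is genuinely used---is well-definedness: if $Af = Ag$ then $A(f-g) = 0$, and hyponormality forces $\|A^{*}(f-g)\| \leq \|A(f-g)\| = 0$, so $A^{*}f = A^{*}g$. Thus $C$ is a well-defined linear map on $\operatorname{ran}(A)$, and the same inequality $\|C(Af)\| = \|A^{*}f\| \leq \|Af\|$ shows that $C$ is a contraction there. I would then extend $C$ by continuity to $\overline{\operatorname{ran}(A)}$ (preserving the contraction bound) and set $C = 0$ on the orthogonal complement $\overline{\operatorname{ran}(A)}^{\perp} = \ker A^{*}$. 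The resulting operator on all of $H$ still satisfies $\|C\| \leq 1$, and by construction $CAf = A^{*}f$ for every $f$, that is, $A^{*} = CA$.

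I expect the main (and essentially the only) obstacle to be the well-definedness step above, since every other ingredient is routine; it is precisely there that the hyponormal inequality is indispensable, and it is what guarantees that the prescription $Af \mapsto A^{*}f$ does not depend on the choice of preimage. As an alternative to the hand construction, one can invoke Douglas's range-inclusion theorem: the hyponormality condition $\|A^{*}f\| \leq \|Af\|$ is equivalent to $(A^{*})^{*}(A^{*}) = AA^{*} \leq A^{*}A$, and applying that theorem with the operators $A^{*}$ and $A$ produces a contraction $C$ with $A^{*} = CA$. Either route delivers the stated equivalence.
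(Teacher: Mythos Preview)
Your proof is correct. Note, however, that the paper does not supply its own proof of this lemma: it is quoted as \cite[Theorem 1]{douglas} and used as a black box, so there is no in-paper argument to compare against. What you have written is the standard direct proof (define $C$ on $\operatorname{ran}(A)$ by $Af\mapsto A^{*}f$, use the hyponormal inequality for well-definedness and the contraction bound, extend by continuity and by zero on the orthogonal complement), and your alternative appeal to Douglas's majorization/factorization theorem is exactly the source being cited. Either argument is acceptable here.
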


Sadraoui \cite{sadraoui} used Lemma \ref{operatorc} to good effect; the following is a narrowed example of what he proved in Section 2.5.

\begin{example}\label{sadraouiexample} For $0<s<1$, let
\begin{align*}
\ph &= sz/(1-(1-s)z), \\
\psi &= 1/(1-(1-s)z), \\
\sigma &= sz + 1-s, \\
\tau &= (sz+1-s)/(sz(1-s)+1-s+s^2), \textrm{ and } \\
\eta &= s/(sz(1-s)+1-s+s^2).
\end{align*}
Then by Proposotion \ref{cowen}, $(T_{\psi} C_{\ph})^{*} = C_\sigma$. Additionally, as proved in  \cite[Corollary 2.5.2]{sadraoui}, $C_\sigma = T_\eta C_{\tau} T_\psi C_{\ph}$ and  $\| T_\eta C_{\tau} \| = 1$. Therefore, $T_\psi C_\phi$ is hyponormal by Lemma \ref{operatorc}.\end{example}

We expand on Sadraoui's example to construct other weights $f$ so that $T_f T_\psi C_{\ph}$ is hyponormal on \Ht.

\begin{theorem}\label{sadraouidisk} Suppose that $\ph, \psi, \sigma, \eta, \tau$ are as in Example \ref{sadraouiexample}. Let $f$ be such that $f, 1/f \in H^{\infty}$. Suppose further that there exists $g \in \Hi$ such that $g \circ \sigma = f$ and $|g(z)| \leq |f(z)|$ for all $z \in \D$. Then $W_{f\psi, \ph}$ is hyponormal on $H^2$, but not quasinormal.
\end{theorem}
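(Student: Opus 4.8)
The plan is to mimic Example \ref{sadraouiexample} step by step, but with the weight $\psi$ replaced by $f\psi$, and to use Lemma \ref{operatorc} to verify hyponormality. First I would compute the adjoint of $W_{f\psi,\ph} = T_f T_\psi C_\ph$. Since $(T_\psi C_\ph)^* = C_\sigma$ from the Example, we have $W_{f\psi,\ph}^* = (T_\psi C_\ph)^* T_f^* = C_\sigma T_{\bar f}$; but the hypothesis $g\circ\sigma = f$ means $T_{\bar f} C_\sigma^* $-type identities will interchange the multiplication past the composition. Concretely, the intertwining relation $C_\sigma T_{\bar f} = T_{\overline{f\circ\sigma^{-1}}}\,C_\sigma$ is not quite what we want; instead I would use the cleaner fact that $C_\sigma T_g = T_{g\circ\sigma} C_\sigma$ for any $g\in\Hi$, i.e. precomposition pulls multiplication through a composition operator. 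This is where $g\circ\sigma = f$ enters: it lets us recognize $T_f C_\sigma$ as $C_\sigma T_g$ or vice versa.

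The key computation: starting from $C_\sigma = T_\eta C_\tau T_\psi C_\ph$ and $\|T_\eta C_\tau\| = 1$ (both from the Example), I want to exhibit a bounded operator $C$ with $\|C\|\le 1$ and $W_{f\psi,\ph}^* = C\, W_{f\psi,\ph}$. Write $A := W_{f\psi,\ph} = T_f T_\psi C_\ph$. Then $A^* = C_\sigma T_{\bar f}$. Now I would like to factor $A^* = D\cdot T_f T_\psi C_\ph$ for some operator $D$ of norm $\le 1$. Using $C_\sigma = T_\eta C_\tau\, T_\psi C_\ph$ we get $A^* = C_\sigma T_{\bar f}$; the obstacle is that $T_{\bar f}$ sits on the wrong side. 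The fix is to instead write $A^* = T_{\bar f}^* $-free by observing $A^* = (T_\psi C_\ph)^* T_f^* = C_\sigma T_{\bar f}$, and then intertwine: I claim $C_\sigma T_{\bar f} = T_{\overline{g}}\, C_\sigma$ would require $\bar f = \bar g\circ\sigma$, i.e. $f = g\circ\sigma$, which is exactly our hypothesis. Hence $A^* = T_{\bar g} C_\sigma = T_{\bar g} T_\eta C_\tau T_\psi C_\ph$. Then $A^* = T_{\bar g} T_\eta C_\tau T_{1/f}\,(T_f T_\psi C_\ph) = \big(T_{\bar g} T_\eta C_\tau T_{1/f}\big) A$, which uses $1/f\in\Hi$ to insert $T_{1/f}T_f = I$. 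So the candidate is $C = T_{\bar g} T_\eta C_\tau T_{1/f}$; but this does not obviously have norm $\le 1$ because of the two extra multipliers. The resolution is to commute $C_\tau$ past $T_{1/f}$: $C_\tau T_{1/f} = T_{(1/f)\circ\tau} C_\tau$, and similarly absorb things so that $C = T_{\bar g\cdot \eta\cdot ((1/f)\circ\tau)}\cdot$-something; the pointwise bound $|g(z)|\le |f(z)|$ should make the surviving multiplier have sup-norm $\le 1$ while $\|C_\tau\|\cdot\|T_\eta\| $-type contributions reassemble into the known $\|T_\eta C_\tau\| = 1$.

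The main obstacle, and the step I expect to require care, is organizing the multiplication operators so that after all the intertwinings the leftover scalar-valued multiplier has supremum norm at most one and the composition-operator part reassembles exactly into $T_\eta C_\tau$ (whose norm is $1$ by the Example) — i.e. showing the whole thing telescopes to $C = T_h (T_\eta C_\tau)$ for some $h\in\Hi$ with $\|h\|_\infty\le 1$, forcing $\|C\|\le 1$. The inequality $|g(z)|\le|f(z)|$ on $\D$ is precisely the hypothesis that produces $\|h\|_\infty\le 1$ after the division by $f$. Once $\|C\|\le 1$ and $A^* = CA$ are established, Lemma \ref{operatorc} gives hyponormality of $W_{f\psi,\ph}$.

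Finally, I would show $W_{f\psi,\ph}$ is not quasinormal. By Theorem \ref{qnn}, a quasinormal weighted composition operator with non-automorphic linear fractional symbol and weight continuous and nonzero at the (boundary) Denjoy-Wolff point $\zeta = 1$ of $\ph$ must be normal; here $f\psi$ is bounded and bounded away from zero, hence continuous and nonzero at $1$ after noting $\psi(1) = 1/s \neq 0$, so quasinormality would force normality. But $W_{f\psi,\ph}$ is not normal: for instance $\ph(z) = sz/(1-(1-s)z)$ with $0<s<1$ is a parabolic non-automorphism (one checks $\ph(1) = 1$, $\ph'(1) = 1$), yet the specific weight $f\psi$ — unless $f$ is a constant multiple of $K_{\sigma(0)}/\psi$-type function — fails the normality characterization; more directly, a normal operator satisfies $\|A^*k\| = \|Ak\|$ on reproducing kernels, and evaluating on $K_1$ (the kernel at the Denjoy-Wolff point) gives $\|A^* K_1\|$ versus $\|A K_1\|$ which differ whenever $f$ is nonconstant, since $A^*K_1$ picks up the value $\overline{f\psi(1)}$ while $AK_1 = f\psi\cdot K_1\circ\ph$ whose norm involves $f$ globally. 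Pinning down one explicit nonconstant admissible $f$ (e.g. $f(z) = 1-cz$ for suitable small $c$ with a matching $g$) and checking the two norms differ completes the argument; this verification is routine once the admissibility conditions on $f$ and $g$ are in hand.
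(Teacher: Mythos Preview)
There are two genuine gaps.

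\textbf{The intertwining step for hyponormality.} Your key move is $C_\sigma T_{\bar f} = T_{\bar g}\, C_\sigma$ whenever $f = g\circ\sigma$. This identity is false: the relation $C_\sigma T_h = T_{h\circ\sigma} C_\sigma$ holds only for \emph{analytic} multipliers $h$, but $T_{\bar f} = T_f^*$ is co-analytic, and co-analytic Toeplitz operators do not slide past composition operators in this way (the projection onto $H^2$ intervenes). Already on the constant function $1$ the two sides give $\overline{g(0)}$ versus $\overline{f(0)} = \overline{g(1-s)}$, which need not agree. Hence your candidate $C = T_{\bar g} T_\eta C_\tau T_{1/f}$ does not satisfy $CA = A^*$, and the plan to ``telescope to $C = T_h(T_\eta C_\tau)$ with $\|h\|_\infty \le 1$'' cannot be carried out, since the co-analytic factor $T_{\bar g}$ cannot be absorbed into an analytic multiplier. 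The paper avoids this by applying the analytic intertwining only \emph{after} taking adjoints: with $C = T_\eta C_\tau T_g^* T_{1/f}$ one gets
\[
T_g^* C_\sigma^* = (C_\sigma T_g)^* = (T_{g\circ\sigma} C_\sigma)^* = (T_f C_\sigma)^* = C_\sigma^* T_f^*,
\]
and then $T_\eta C_\tau C_\sigma^* = C_\sigma$ from Example~\ref{sadraouiexample} yields $CA = C_\sigma T_f^* = A^*$. The norm bound uses not a single-multiplier estimate but the hyponormality of analytic Toeplitz operators: $\|T_g^* T_{1/f} x\| \le \|T_g T_{1/f} x\| = \|T_{g/f} x\| \le \|x\|$ since $|g/f|\le 1$ on $\D$, and $\|T_\eta C_\tau\| = 1$ finishes it.

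\textbf{The ``not quasinormal'' step.} Your argument rests on $\ph$ being a parabolic non-automorphism with Denjoy--Wolff point $1$. This is incorrect: $\ph(0) = 0$ with $\ph'(0) = s < 1$, so the Denjoy--Wolff point is $0\in\D$, and $\ph$ has \emph{two} fixed points $0$ and $1$, hence is not parabolic (indeed $\ph'(1) = 1/s > 1$, not $1$). This is precisely the ``$|\zeta|<1$, $\ph$ has a fixed point on $\partial\D$'' situation of Section~3.4, so neither Theorem~\ref{qnn} nor Theorem~\ref{parahypo} applies as you invoke them. The paper's argument for non-quasinormality is a one-line application of Proposition~\ref{parabolic}: $\ph$ is a non-automorphic member of $\mbox{LFT}(\D)$ sending $1\in\partial\D$ to $1\in\partial\D$, and the weight is nonzero at $1$; were $W_{f\psi,\ph}$ quasinormal, $\ph$ would have to be parabolic, which it is not.
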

\begin{proof} Note that the adjoint of $W_{f\psi, \ph} = T_f T_\psi C_{\ph}$ is $C_\sigma T_f^*$. Let $ C = T_\eta C_\tau T_{g}^{*} T_{1/f}$. Since by Example 3.7, $C_{\sigma}^{\ast}=T_{\psi}C_{\varphi}$ and $C_\sigma=T_{\eta}C_{\tau}C_{\sigma}^{\ast}$, we have
\begin{align*}
C T_f T_\psi C_{\varphi} &= T_\eta C_\tau T_{g}^{*} T_{1/f} T_f T_\psi C_{\ph} \\
&= T_\eta C_\tau T_{g}^{*} T_\psi C_{\ph} \\
&= T_\eta C_\tau T_{g}^{*} C_\sigma^* \\
&= T_\eta C_\tau (C_\sigma T_{g})^{*} \\
&= T_\eta C_\tau (T_{g \circ \sigma} C_\sigma)^{*} \\
&= T_\eta C_\tau (T_{f} C_\sigma)^{*} \\
&= T_\eta C_\tau  C_\sigma^{*} T_f^* \\
&= C_\sigma T_f^*.
\end{align*}

It remains to show that $||C|| \leq 1$. Let $x \in H^{2}$ have $\|x\| \leq 1$. Note that all analytic Toeplitz operators are hyponormal on $H^2$, so $||T_{g}^{*} T_{1/f} x|| \leq ||T_{g} T_{1/f} x|| = ||T_{\frac{g}{f}}x||$. Since $|g(z)| \leq |f(z)|$ for all $z \in \D$, we have $$\left|\frac{g(z)}{f(z)}\right| \leq \left|\frac{f(z)}{f(z)}\right| = 1$$
on $\mathbb{D}$. This means that $||\frac{g}{f}||_{\infty} \leq 1$ and $||\frac{g}{f} x|| \leq ||\frac{g}{f}||_{\infty} ||x|| \leq ||x||$, so finally we have $||T_{g}^{*} T_{1/f} x|| \leq 1$. Since it has been shown that $T_\eta C_\tau$ has norm $1$, we see from the calculations here that $||Cx|| \leq 1$ for any $x$ with $||x|| \leq 1$, so $||C|| \leq 1$ and $W_{f\psi, \ph}$ is hyponormal on $H^2$. By Theorem \ref{parabolic}, it is not quasinormal.
\end{proof}

\begin{example} Let $\ph, \psi, \sigma, \eta, \tau$ be as in Example \ref{sadraouiexample} with $s = 1/2$. Then $f(z) = 2+z$ and $f(z) = e^z$ are examples of weights so that $W_{f\psi, \ph}$ is hyponormal on $H^2$ (take $g = f\circ \sigma^{-1}$ in both cases). The operator \W\ with $\psi = 2e^z/(2-z)$ and $\ph = z/(2-z)$ is the example of a hyponormal weighted composition operator such that the weight is not rational (of course, many more can be constructed from this theorem).
\end{example}

\section{Further Questions}

Below are some questions that could extend our work:

\begin{enumerate}
\item Are there quasinormal weighted composition operators on $H^2$ or $A^2_{\alpha}$ where \ph\ is an automorphism and $\psi$ is \textit{not} bounded away from zero, so that \W\ is not invertible?
\item When \ph\ is a parabolic non-automorphism, if \W\ is quasinormal, then it is normal. Some of these normal operators were already described for $H^2$ in \cite{Bourdon}, but it was assumed that $\psi$ had a very particular form. Are there other weights so that \W\ is normal? Our conjecture is no, since it is already impossible when \ph\ is a positive parabolic non-automorphism.
\item If the Denjoy-Wolff point of \ph\ is in \D\, then \ph\ must be linear fractional if \W\ is normal or even cohyponormal on $H^2$ \cite{coko}. Is this also true when \W\ is quasinormal? Furthermore, no authors have proven that if \W\ is normal on $H^2$ that \ph\ must then be linear fractional (particularly when the Denjoy-Wolff point is on the boundary of $\D$).
\item We have shown several different weights in Section 3 that make \W\ hyponormal. Is it possible to identify all weights $\psi \in \Hi$ so that \W\ is hyponormal, even with the assumption that \ph\ is linear fractional?
\end{enumerate}

\footnotesize

\bibliographystyle{amsplain}

\end{document}